\algnewcommand\algorithmicinput{\textbf{Input:}}
\algnewcommand\Input{\item[\algorithmicinput]}
\algnewcommand\algorithmicoutput{\textbf{Output:}}
\algnewcommand\Output{\item[\algorithmicoutput]}
\newcolumntype{C}[1]{>{\Left}m{#1}}
\newcommand{\fr}[1]{\mathfrak{#1}}
\newtheorem{thm}{Theorem}
\newtheorem{lem}[thm]{Lemma}
\begin{document}
\title{Optimization-Based Bound Tightening using a Strengthened QC-Relaxation of the Optimal Power Flow Problem}

\author{Kaarthik Sundar\and 
Harsha Nagarajan \and 
Sidhant Misra \and
Mowen Lu \and
Carleton Coffrin \and
Russell Bent
}
\institute{K. Sundar, C. Coffrin \at 
              Information Systems and Modeling (A-1), Los Alamos National Laboratory, NM, USA 
            \and 
            H. Nagarajan, S. Misra, R. Bent \at
              Applied Mathematics and Plasma Physics group (T-5), Los Alamos National Laboratory, NM, USA 
           \and
        M. Lu \at 
        Department of Industrial Engineering, Clemson University, SC, USA
}

\date{Received: date / Accepted: date}
% The correct dates will be entered by the editor

\maketitle

\begin{abstract}
This article develops a strengthened convex quadratic convex (QC) relaxation of the AC Optimal Power Flow (AC-OPF) problem and presents an optimization-based bound-tightening (OBBT) algorithm to compute tight, feasible bounds on the voltage magnitude variables for each bus and the phase angle difference variables for each branch in the network. Theoretical properties of the strengthened QC relaxation that show its dominance over the other variants of the QC relaxation studied in the literature are also derived. The effectiveness of the strengthened QC relaxation is corroborated via extensive numerical results on benchmark AC-OPF test networks.  In particular, the results demonstrate that the proposed relaxation consistently provides the tightest variable bounds and optimality gaps with negligible impacts on runtime performance. 
\end{abstract}

\keywords{
Optimal power flow \and optimality-based bound tightening \and QC relaxation \and global optimization \and extreme points}

\section*{Nomenclature}
\noindent
\textbf{Sets and Parameters} \\
{$\cal{N}$} - {set of nodes (buses)} \\
{$\cal{G}$} - {set of generators} \\
{${\cal{G}}_i$} - {set of generators at bus $i$} \\
{$\cal{E}$} - {set of \textit{from} edges (branches)} \\
{$\cal{E}^{R}$} - {set of \textit{to} edges (branches) } \\
{$\bm{c}_0, \bm{c}_1, \bm{c}_2$} - {generation cost coefficients} \\
{$\bm{i}$} - {imaginary number constant} \\
{$\bm{Y}_{ij} = \bm{g}_{ij} +\bm{i}\bm{b}_{ij}$} - {admittance on branch $ij$}\\
{$\bm{S}_{i}^{\bm{d}} = \bm{p}_{i}^{\bm{d}} + \bm{i}\bm{q}_{i}^{\bm{d}}$} - {AC power demand at bus $i$} \\
{$\bm{s}_{ij}^{\bm u}$} - {apparent power limit on branch $ij$} \\
{$\bm{\theta}^{\bm l}_{ij},\bm{\theta}^{\bm u}_{ij}$} - {phase angle difference limits on branch $ij$} \\
{$\bm{\theta}_{ij}^{\bm{m}}$} - {$\max(|\bm{\theta}^{\bm l}_{ij}|,|\bm{\theta}^{\bm u}_{ij}|)$ on branch $ij$} \\
{$\bm{v}^{\bm l}_i, \bm{v}^{\bm u}_i$} - {voltage magnitude limit at bus $i$} \\
{$\bm{S}_{i}^{\bm{gl}}$, $\bm{S}_{i}^{\bm{gu}}$} - {power generation limit at bus $i$} \\
{$\fr{R}(\cdot)$} - {real part of a complex number} \\
{$\fr{I}(\cdot)$} - {imaginary part of a complex number} \\
{$(\cdot)^*$} - {hermitian conjugate of a complex number} \\
{$\mid\cdot\mid, \angle\cdot$} - {magnitude, angle of a complex number} \\

\noindent
\textbf{Continuous variables} \\
{${V_i} = v_i\bm{e}^{\bm{i}\theta_i}$} - {AC voltage at bus $i$} \\
{$\theta_{ij} = \angle V_i - \angle V_j$} - {phase angle difference on branch $ij$} \\
{$W_{ij}$} - {AC voltage product on branch $ij$, i.e., $V_iV^*_j$} \\
{$S_{ij} = p_{ij} +\bm{i}q_{ij}$} - {AC power flow on branch $ij$} \\
{$S^g_i= p^g_i+\bm{i}q^g_i$} - {AC power generation at bus $i$} \\
{$l_{ij}$} - {current magnitude squared on branch $ij$} \\

\textbf{Notation} In this paper, constants are typeset in bold face. In the AC power flow equations, the primitives, $V_i$, $S_{ij}$, $S^g_i$, $\bm{S}_{i}^{\bm{d}}$ and $\bm{Y}_{ij}$ are complex quantities. Given any two complex numbers (variables/constants) $z_1$ and $z_2$, $z_1 \geqslant z_2$ implies $\fr{R}({z}_1) \geqslant \fr{R}({z}_2)$ and $\fr{I}({z}_1) \geqslant \fr{I}({z}_2)$. $|\cdot|$ represents absolute value when applied to a real number.

\section{Introduction} \label{sec:intro}
The AC Optimal Power Flow (AC-OPF) problem is one of the most fundamental optimization problems for economic and reliable operation of the electric transmission system. Since its introduction in 1962 \cite{Carpentier1962}, efficient solution techniques to solve the AC-OPF have garnered a lot of attention from the research community. The objective of the AC-OPF is to minimize the generation cost while satisfying the power flow constraints and the network limits. The fundamental difficultly with solving the AC-OPF arises due to the nonlinear and non-convex nature of the power flow constraints. The literature with regards to AC-OPF can predominantly be classified into the one of the following three groups: (i) developing fast algorithms to compute a local optimal solution to the AC-OPF either using meta-heuristics or numeral techniques like gradient descent \cite{Frank2012} etc., (ii) developing convex relaxations that convexify the feasible space defined by the AC-OPF, and (iii) developing global optimization algorithms for AC-OPF \cite{Kocuk2016}. The NP-hardness of AC-OPF \cite{Lehmann2016} makes guarantees on feasibility and global optimality very difficult and hence, the past decade has seen a surge in the work devoted towards developing convex relaxations of AC-OPF. They include the Semi-Definite Programming (SDP) \cite{Bai2008}, Second Order Cone (SOC) \cite{Jabr2006}, the recent Quadratic Convex (QC) \cite{Hijazi2017} and Moment-Based \cite{Molzahn2014} relaxations. In general, convex relaxations of AC-OPF are appealing because they can provide lower bounds to the AC-OPF objective value, prove infeasibility of the AC-OPF, or can aid in proving global optimality by producing a feasible solution in the non-convex space defined by the AC-OPF. One major factor that parameterizes the strength of the convex relaxation for the AC-OPF is the variable bounds. This dependence goes both ways, i.e., tightened bounds can aid in providing tighter convex relaxations and better convex relaxations can aid in tightening the variable bounds even further \cite{Coffrin2015,Coffrin2018}. In this article, we exploit this dependence between bound-tightening and strong convex relaxations in two novel ways: (i) we first present a strengthened QC relaxation that uses an extreme-point representation and strictly dominates the state-of-the-art QC relaxations in the literature \cite{Hijazi2017,Lu2018} and (ii) we develop an optimality-based bound-tightening algorithm (OBBT) that exploits the strengthened QC relaxations. These two novel contributions are put together to obtain lower bounds, that are better than the current known lower bounds, for the benchmark AC-OPF problem instances. We also note that variants of the bound-tightening algorithm presented in this article are used routinely in the mixed-integer nonlinear programming literature \cite{Nagarajan2019,Puranik2017} and also in algorithmic approaches
used to tighten variable bound in AC-OPF \cite{Chen2016,Coffrin2015}. 
%concerning AC-OPF \cite{Chen2016,Coffrin2015} to tighten the variable bounds. 
Furthermore, we show theoretical properties of the strengthened QC relaxation and present extensive experimental results that demonstrate the value of the convex relaxation applied in conjunction with OBBT. In particular, we show that 

\begin{enumerate}
    \item The strengthened QC relaxation is able to obtain the tightest voltage and phase angle difference bounds for the AC-OPF problem compared to the other QC relaxations in the literature. 
    \item When utilized in the context of global optimization of AC-OPF with OBBT, on networks with less than 1000 buses, the strengthened QC relaxation results in an optimality gap of $< 1\%$ for 52 out of 57 networks.
\end{enumerate}

\section{The Strengthened QC Relaxation} \label{sec:qc}
This section presents an overview of the mathematical formulation of the AC-OPF and its state-of-the-art QC relaxation and develops the strengthened QC relaxation for the AC-OPF. 

\subsection{AC Optimal Power Flow} \label{subsec:AC-OPF}
We start by presenting the mathematical formulation of the AC-OPF problem in Model \ref{model:ac-opf} with additional $W_{ij}$ and $W_{ii}$ variables for each branch and bus, respectively. The optimal solution to the AC-OPF problem minimizes generation costs for a specified demand and satisfies engineering constraints and power flow physics. 
\begin{model}[t]
\caption{AC Optimal Power Flow (AC-OPF) problem}
\label{model:ac-opf}
\begin{subequations}
\label{eq:AC_polar}
\begin{align}
\mbox{\bf minimize: } & \sum_{i\in \cal{G}} \bm{c}_{2i}(\fr{R}(S_i^g)^2) + \bm{c}_{1i}\fr{R}(S_i^g) + \bm{c}_{0i} \label{objective} \\
\mbox{\bf subject to: } & \nonumber
\end{align}
\vspace{-0.7cm}
\begin{align}
& \sum_{k \in {\cal{G}}_i} S_k^g - \bm{S}_{i}^{\bm{d}} = \sum_{(i,j)\in\cal{E} \cup \cal{E^R}}S_{ij} \quad \forall i \in \cal{N} \label{s_balance} \\
& S_{ij}=\bm{Y}^{*}_{ij}W_{ii} - \bm{Y}^*_{ij}W_{ij} \quad \forall (i,j) \in \cal{E} \label{sij}\\
& S_{ji}=\bm{Y}^{*}_{ij}W_{jj} - \bm{Y}^{*}_{ij}W^*_{ij}\quad \forall (i,j) \in \cal{E} \label{sji}\\
& W_{ii} = |V_i|^2  \quad   \forall i \in \cal{N} \label{wii}\\
& W_{ij} = V_iV_j^* \quad  \forall (i,j) \in \cal{E} \label{wij}\\
& \bm{\theta}^{\bm l}_{ij} \leqslant \theta_{ij} \leqslant \bm{\theta}^{\bm u}_{ij} \quad \forall (i,j) \in \cal{E} \label{theta}\\
& (\bm{v}^{\bm l}_i)^2 \leqslant W_{ii} \leqslant (\bm{v}^{\bm u}_i)^2 \quad  \forall i \in \cal{N} \label{w_limit}\\
& \bm{S}_{i}^{\bm{gl}} \leqslant S^g_i \leqslant \bm{S}_{i}^{\bm{gu}} \quad  \forall i \in \cal{G} \label{g_cap}\\
&|S_{ij}| \leqslant \bm{s}_{ij}^{\bm u} \quad  \forall (i,j) \in \cal{E}\cup \cal{E^R} \label{s_cap}
\end{align}
\end{subequations}
\end{model}
The convex quadratic objective \eqref{objective} minimizes total generation cost. Constraint \eqref{s_balance} enforces nodal power balance at each bus. Constraints \eqref{sij} through \eqref{wij} model the AC power flow on each branch. Constraint \eqref{theta} limits the phase angle difference on each branch. Constraint \eqref{w_limit} limits the voltage magnitude square at each bus. Constraint \eqref{g_cap} restricts the apparent power output of each generator. Finally, constraint \eqref{s_cap} restricts the apparent power transmitted on each branch. For simplicity, we omit the details of constant bus shunt injections, transformer taps, phase shifts, and line charging, though we include them in the computational studies. The AC-OPF is a hard, non-convex problem \cite{Bienstock2015}, with non-convexities arising from the constraints \eqref{wii} and \eqref{wij}.

\subsection{QC Relaxation using Recursive McCormick} \label{subsec:qc1}
The quadratic convex relaxation of the AC-OPF, proposed in \cite{Coffrin2016,Hijazi2017}, is inspired by an arithmetic analysis of \eqref{wii} and \eqref{wij} in polar coordinates (i.e., $V_i = v_i \,\angle \theta_i ~\forall i \in \mathcal N$) with the goal of preserving stronger links between the voltage variables. Rewriting Eq. \eqref{wii} and \eqref{wij} using the polar voltage variables, the non-convexities reduce to the following equations:
\begin{subequations}
\begin{align}
W_{ii} &= v_i^2 \quad \forall i \in \mathcal N\\
\label{eq:wcs}\fr{R}(W_{ij}) &= v_iv_j\cos(\theta_{ij}) \quad \forall (i,j) \in \mathcal E\\
\label{eq:wsn}\fr{I}(W_{ij}) &= v_iv_j\sin(\theta_{ij}) \quad \forall (i,j) \in \mathcal E 
\end{align}
\label{eq:w_nonconv}
\end{subequations}
Each of the above non-convex equations are then relaxed by composing convex envelopes of the non-convex sub-expressions using the bounds on $v_i, v_j, \theta_{ij}$ variables. For the square and product of variables, the QC relaxation uses the well-known McCormick envelopes \cite{McCormick1976}, i.e.,
\begin{equation}
\tag{T-CONV}
\langle x^2 \rangle^T \equiv
\begin{cases}
\widecheck{x}  \geq  x^2\\
\widecheck{x}  \leq  ( \bm {x^u} + \bm {x^l})x - \bm {x^u} \bm {x^l}
\end{cases}
\end{equation}
\begin{equation*}
\tag{M-CONV}
\langle xy \rangle^M \equiv
\begin{cases}
\widecheck{xy}  \geq  \bm {x^l}y + \bm {y^l}x - \bm {x^l}\bm {y^l}\\
\widecheck{xy}  \geq  \bm {x^u}y + \bm {y^u}x - \bm {x^u}\bm {y^u}\\
\widecheck{xy}  \leq  \bm {x^l}y + \bm {y^u}x - \bm {x^l}\bm {y^u}\\
\widecheck{xy}  \leq  \bm {x^u}y + \bm {y^l}x - \bm {x^u}\bm {y^l}
\end{cases}
\end{equation*}
The above convex envelopes are parameterized by the variable bounds (i.e., $\bm {x^l}, \bm {x^u}, \bm {y^l}, \bm {y^u}$). The convex envelopes for the the cosine (C-CONV) and sine (S-CONV) functions, under the assumption that the phase angle difference bound satisfies $- \bm \pi/2 \leq \bm {\theta^l}_{ij} \leq \bm {\theta^u}_{ij} \leq \bm \pi/2$ \cite{Coffrin2017}, are given by 
\begin{equation*}
\langle \cos(x) \rangle^C \equiv
\begin{cases}
\widecheck{cs}  \leq 1 - \frac{1-\cos({\bm {x^m}})}{({\bm {x^m}})^2} x^2\\
\widecheck{cs}  \geq \frac{\cos(\bm {x^l}) - \cos(\bm {x^u})}{(\bm {x^l}-\bm {x^u})}(x - \bm {x^l}) + \cos(\bm {x^l})
\end{cases}
\end{equation*}
\begin{equation*}
\langle \sin(x) \rangle^S \! \equiv \!
\begin{cases}
\widecheck{sn} \leq \cos\left(\frac{\bm {x^m}}{2}\right)\left(x -\frac{\bm {x^m}}{2}\right) + \sin\left(\frac{\bm {x^m}}{2}\right)  \\
\widecheck{sn} \geq \cos\left(\frac{\bm {x^m}}{2}\right)\left(x +\frac{\bm {x^m}}{2}\right) - \sin\left(\frac{\bm {x^m}}{2}\right) \\
\widecheck{sn} \geq \! \frac{\sin(\bm {x^l}) - \sin(\bm {x^u})}{(\bm {x^l}-\bm {x^u})}(x \!-\! \bm {x^l}) \!+\!  \sin(\bm {x^l}) \; \mbox {if} \; \bm {x^l} \! \geq \! 0 \\
\widecheck{sn} \leq \! \frac{\sin(\bm {x^l}) - \sin(\bm {x^u})}{(\bm {x^l}-\bm {x^u})}(x \!-\! \bm {x^l}) \!+\! \sin(\bm {x^l}) \; \mbox {if} \; \bm {x^u} \! \leq \! 0 \\
\end{cases}
\end{equation*}
\noindent
respectively, where $\bm {x^m} = \max(|\bm {x^l}|, |\bm {x^u}|)$. The QC relaxation of the equations \eqref{eq:w_nonconv} is now obtained composing
the convex envelopes for square, sine, cosine, and the product of two variables; the complete relaxation is shown in Model \ref{model:qc-opf}. In Model \ref{model:qc-opf} and the models that follow, we abuse notation and let $\langle f(\cdot)\rangle^C$ denote the variable on the left-hand side of the convex envelope, $C$, for the function $f(\cdot)$. When such an expression is used inside an equation, the constraints $\langle f(\cdot)\rangle^C$ are also added to the model. 
\begin{model}[t]
\allowdisplaybreaks
\caption{Original QC Relaxation (QC-RM).}
\label{model:qc-opf}
\begin{subequations}
\begin{align}
\mbox{\bf minimize: } & \sum_{i\in \cal{G}} \bm{c}_{2i}(\fr{R}(S_i^g)^2) + \bm{c}_{1i}\fr{R}(S_i^g) + \bm{c}_{0i} \\
\mbox{\bf subject to: } & \mbox{\eqref{s_balance} -- \eqref{sji}, \eqref{theta} -- \eqref{s_cap},  \eqref{eq:4d_cut_1} -- \eqref{eq:4d_cut_2}} & \nonumber 
\end{align}
\vspace{-0.7cm}
\begin{align}
& W_{ii} = \langle v_i^2 \rangle^T  \;\; i \in \mathcal N \label{qc_1} \\
&\Re(W_{ij}) = \langle \langle v_i v_j \rangle^M \langle \cos(\theta_{ij}) \rangle^C \rangle^M \;\; \forall(i,j) \in \mathcal E \label{qc_2} \\
&\Im(W_{ij}) = \langle \langle v_i v_j \rangle^M \langle \sin(\theta_{ij}) \rangle^S \rangle^M  \;\; \forall(i,j) \in \mathcal E \label{qc_3} \\
& S_{ij} + S_{ji} = \bm Z_{ij} l_{ij} \;\; \forall (i,j) \in \mathcal E \label{qc_4} \\
& |S_{ij}|^2 \leqslant W_{ii} l_{ij} \;\; \forall (i,j) \in \mathcal E \label{qc_5}
\end{align}
\end{subequations}
\end{model}
Eq. \eqref{qc_4} and \eqref{qc_5} in Model \ref{model:qc-opf} are convex constraints that connect apparent power flow on branches ($S_{ij}$) with current magnitude squared variables ($l_{ij}$).  It is important to highlight that Model \ref{model:qc-opf} includes the ``Lifted Nonlinear Cuts'' (LNCs) of \cite{Coffrin2017}, which further improve the version presented in \cite{Coffrin2016,Hijazi2017}.  The LNCs are formulated using the following constants that are based on variable bounds, i.e.:
\begin{subequations}
\begin{align}
& \bm {v^\sigma}_i = \bm {v^l}_i + \bm {v^u}_i \;\; \forall i \in \mathcal N \label{eq:sec_1} \\
%& \bm {v^\sigma}_j = \bm {v^l}_j + \bm {v^u}_j   \;\; \forall (i,j)\in E \label{eq:sec_2} \\
& \bm \phi_{ij} = (\bm {\theta^u}_{ij} + \bm {\theta^l}_{ij})/2   \;\; \forall (i,j)\in \mathcal E \\
& \bm \delta_{ij} = (\bm {\theta^u}_{ij} - \bm {\theta^l}_{ij})/2   \;\; \forall (i,j)\in \mathcal E.
\end{align}
\end{subequations}
The LNCs are then given by \eqref{eq:4d_cut_1}-\eqref{eq:4d_cut_2}, and are linear in the $w_i := W_{ii}, w_j := W_{jj}, w^R_{ij} := \Re(W_{ij}), w^I_{ij} := \mathfrak{I}(W_{ij})$ variables.
%These valid inequalities are used in the computational experiments for all the QC models presented in this article. 
%
\begin{figure}[!t]
\hrulefill
\begin{subequations}
\begin{align}
\begin{gathered}
\bm {v^\sigma}_i\bm {v^\sigma}_j(w^R_{ij}\cos(\bm \phi_{ij}) \!+\! w^I_{ij}\sin(\bm \phi_{ij})) \!-\! \bm {v^u}_j \cos(\bm \delta_{ij})\bm {v^\sigma}_jw_i - \bm {v^u}_i \cos(\bm \delta_{ij})\bm {v^\sigma}_iw_j \geqslant \\ 
\bm {v^u}_i\bm {v^u}_j \cos(\bm \delta_{ij})(\bm {v^l}_i\bm {v^l}_j - \bm {v^u}_i\bm {v^u}_j) \; \forall (i,j) \!\in\! \mathcal E
\end{gathered}  \label{eq:4d_cut_1} \\
\begin{gathered}
\bm {v^\sigma}_i\bm {v^\sigma}_j(w^R_{ij}\cos(\bm \phi_{ij}) \!+\! w^I_{ij}\sin(\bm \phi_{ij})) \!-\! \bm {v^l}_j \cos(\bm \delta_{ij})\bm {v^\sigma}_jw_i - \bm {v^l}_i \cos(\bm \delta_{ij})\bm {v^\sigma}_iw_j \geqslant \\ 
\bm {v^l}_i\bm {v^l}_j \cos(\bm \delta_{ij})(\bm {v^u}_i\bm {v^u}_j - \bm {v^l}_i\bm {v^l}_j) \;\; \forall (i,j) \!\in\! \mathcal E
\end{gathered}
\label{eq:4d_cut_2} 
\end{align}
\end{subequations}
%\vspace{-0.5cm}
%\caption{The Lifted Nonlinear Cuts (LNCs) of Model \ref{model:svfs}.}
\hrulefill
\end{figure}

\subsection{QC Relaxation using Extreme Point Representation}
We now present an alternate QC relaxation that uses an extreme-point representation, instead of applying the McCormick constraints recursively, to express the convex envelope of $\fr{R}(W_{ij})$ and $\fr{I}(W_{ij})$ in Eq. \eqref{eq:w_nonconv}. After the introduction of lifted variables $\widecheck{cs}_{ij}$ and $\widecheck{sn}_{ij}$ for the cosine and sine functions, respectively,  for each branch $(i,j) \in \mathcal E$, the non-convex constraints in Eq. \eqref{eq:wcs} and Eq. \eqref{eq:wsn} become trilinear term of the form $v_i v_j \widecheck{cs}_{ij}$ and $v_i v_j \widecheck{sn}_{ij}$, respectively. This version of the QC relaxation uses the extreme-point representation to obtain the convex envelope of these trilinear terms. It is known in the literature that the extreme-point representation captures the convex hull of a given, single multilinear term \cite{Rikun1997} and that it is tighter than the recursive McCormick envelopes in Eq. \eqref{qc_2} and \eqref{qc_4} \cite{Narimani2018,Nagarajan2018lego}. Nevertheless, though we capture the term-wise convex hull, we lose a potential connection between the voltage products in Eq. \eqref{eq:wcs} and Eq. \eqref{eq:wsn} that is captured in Model \ref{model:qc-opf} using the shared lifted variable, $\widecheck{v_i v_j}$, to capture $\langle v_i v_j \rangle^M$ in Eq. \eqref{qc_2} and Eq. \eqref{qc_3}. Hence, no clear dominance between the original QC relaxation in Model \ref{model:qc-opf} and the QC relaxation using an extreme point representation in the forthcoming Model \ref{model:qc-opf-lambda} can be established. This is also depicted in the Venn diagram in Figure \ref{fig:venn} and later observed in the computational results as well.

We now define an extreme point before describing the convex envelope. Given a set $X$, a point $p \in X$ is extreme if there does not exist two other distinct points $p_1,p_2 \in X$ and a non-negative multiplier $\lambda \in [0,1]$ such that  $p = \lambda p_1 + (1-\lambda)p_2$. To that end, let $\varphi(x_1, x_2, x_3) = x_1 x_2 x_3$ denote a trilinear term with variable bounds $\bm{x^l}_i \leqslant x_i \leqslant \bm{x^u}_i$ for all $i=1,2,3$. Also, let $\bm \xi = \langle \bm{\xi_1}, \dots, \bm{\xi_8}\rangle$ denote the vector of eight extreme points of $[\bm{x^l}_1,\bm{x^u}_1] \times [\bm{x^l}_2,\bm{x^u}_2] \times [\bm{x^l}_3,\bm{x^u}_3]$ and we use $\bm{\xi_k}^i$ to denote the $i$\textsuperscript{th} coordinate of $\bm{\xi_k}$. The extreme points in $\bm \xi$ are given by
\begin{align}
    & \bm{\xi_1} = (\bm{x^l}_1, \bm{x^l}_2, \bm{x^l}_3),\;  \bm{\xi_2} = (\bm{x^l}_1, \bm{x^l}_2, \bm{x^u}_3),\; \bm{\xi_3} = (\bm{x^l}_1, \bm{x^u}_2, \bm{x^l}_3), \nonumber \\
    & \bm{\xi_4} = (\bm{x^l}_1, \bm{x^u}_2, \bm{x^u}_3),\; \bm{\xi_5} = (\bm{x^u}_1, \bm{x^l}_2, \bm{x^l}_3),\;  \bm{\xi_6} = (\bm{x^u}_1, \bm{x^l}_2, \bm{x^u}_3), \nonumber \\
    & \bm{\xi_7} = (\bm{x^u}_1, \bm{x^u}_2, \bm{x^l}_3),\text{ and }  \bm{\xi_8} = (\bm{x^u}_1, \bm{x^u}_2, \bm{x^u}_3). \label{eq:extreme_points}
\end{align}
Then, the tightest convex envelope of the trilinear term $x_1 x_2 x_3$ (TRI-CONV) is given by  
\begin{equation} \label{eq:triform}
\langle x_1 x_2 x_3 \rangle^{\lambda} \equiv
\begin{cases}
\widecheck{x} = \sum_{k=1}^8 \lambda_i \,\varphi(\bm{\xi_k}^1, \bm{\xi_k}^2, \bm{\xi_k}^3) \\
x_i = \sum_{k=1}^8 \lambda_k \, \bm{\xi_k}^i \;\; \forall i=1,2,3 \\
\sum_{k=1}^8 \lambda_k = 1, \;\; \lambda_k \geqslant 0 \;\; \forall k=1,\dots,8
\end{cases}
\end{equation}
Notice, that the lifted variable $\widecheck{x}$ represents the trilinear term i.e., it will replace the the right-hand side of Eq. \eqref{qc_2_lambda} and \eqref{qc_3_lambda}. Using the convex envelope for the trilinear term results in the QC relaxation given by Model \ref{model:qc-opf-lambda}.
\begin{model}[t]
\caption{$\lambda$-based QC relaxation (QC-LM).}
\label{model:qc-opf-lambda}
\begin{subequations}
\begin{align}
\mbox{\bf minimize: } & \sum_{i\in \cal{G}} \bm{c}_{2i}(\fr{R}(S_i^g)^2) + \bm{c}_{1i}\fr{R}(S_i^g) + \bm{c}_{0i} \\
\mbox{\bf subject to: } & \mbox{\eqref{s_balance} -- \eqref{sji}, \eqref{theta} -- \eqref{s_cap}, \eqref{qc_1}, } & \nonumber \\
& \mbox{\eqref{qc_4} -- \eqref{qc_5}, \eqref{eq:4d_cut_1} -- \eqref{eq:4d_cut_2}} & \nonumber
\end{align}
\vspace{-0.7cm}
\begin{align}
&\Re(W_{ij}) = \langle  v_i v_j \widecheck{cs}_{ij} \rangle^{\lambda^c_{ij}} \;\; \forall(i,j) \in \mathcal E \label{qc_2_lambda} \\
&\Im(W_{ij}) = \langle v_i v_j \widecheck{sn}_{ij} \rangle^{\lambda^s_{ij}}  \;\; \forall(i,j) \in \mathcal E \label{qc_3_lambda} 
\end{align}
\end{subequations}
\end{model}
In Model \ref{model:qc-opf-lambda}, the constraints defining the lifted variables $\widecheck{cs}_{ij}$ and $\widecheck{sn}_{ij}$, for each branch $(i,j) \in \mathcal E$, are included in Eq. \eqref{qc_2_lambda} and \eqref{qc_3_lambda}, respectively. We also remark that distinct multiplier variables $\lambda^c_{ij}$ and $\lambda^s_{ij}$ are used for capturing the convex envelopes in Eq. \eqref{qc_2_lambda} and \eqref{qc_3_lambda}, respectively. 
% The computational results in this paper highly that there is no dominance relation between the Models \ref{model:qc-opf} and \ref{model:qc-opf-lambda}. 

\subsection{A Strengthened QC Relaxation}
This section presents additional constraints to strengthen Model \ref{model:qc-opf-lambda}. The fundamental idea used to develop these strengthening  constraints lies in the observation that different sets of multiplier variables $\lambda^c_{ij}$ and $\lambda^s_{ij}$ are used for capturing the convex envelopes in Eq. \eqref{qc_2_lambda} and \eqref{qc_3_lambda}, respectively. There are no constraints that link $\lambda^c_{ij}$ and $\lambda^s_{ij}$ directly despite sharing two out of three variables in the trilinear term. Adding such a linking constraint intuitively leads to strengthening the relaxation in Model \ref{model:qc-opf-lambda}. We first state the linking constraint for every branch $(i,j) \in \mathcal E$, as follows:
\begin{align}   
    \begin{pmatrix}
        \lambda^c_{ij,1} + \lambda^c_{ij,2} - \lambda^s_{ij,1} - \lambda^s_{ij,2}\\
        \lambda^c_{ij,3} + \lambda^c_{ij,4} - \lambda^s_{ij,3} - \lambda^s_{ij,4} \\
        \lambda^c_{ij,5} + \lambda^c_{ij,6} - \lambda^s_{ij,5} - \lambda^s_{ij,6} \\
        \lambda^c_{ij,7} + \lambda^c_{ij,8} - \lambda^s_{ij,7} - \lambda^s_{ij,8} 
    \end{pmatrix}^T 
    \begin{pmatrix}
    \bm{v^l}_i \cdot \bm{v^l}_j \\
    \bm{v^l}_i \cdot \bm{v^u}_j \\
    \bm{v^u}_i \cdot \bm{v^l}_j \\
    \bm{v^u}_i \cdot \bm{v^u}_j 
    \end{pmatrix} = 0 
    \label{eq:linking}
\end{align}
This constraint enforces, for each branch $(i,j) \in \mathcal E$, the value of the voltage product $v_i v_j$ to take the same value in Eq.  \eqref{qc_2_lambda} and \eqref{qc_3_lambda}. The resulting strengthened QC relaxation is summarized in Model \ref{model:qc-opf-lambda-strengthened}.
\begin{model}
\caption{Tighter $\lambda$-based QC Relaxation (QC-TLM).}
\label{model:qc-opf-lambda-strengthened}
\begin{subequations}
\begin{align}
\mbox{\bf minimize: } & \sum_{i\in \cal{G}} \bm{c}_{2i}(\fr{R}(S_i^g)^2) + \bm{c}_{1i}\fr{R}(S_i^g) + \bm{c}_{0i} \\
\mbox{\bf subject to: } & \mbox{\eqref{s_balance} -- \eqref{sji}, \eqref{theta} -- \eqref{s_cap}, \eqref{qc_1}, \eqref{qc_4} -- \eqref{qc_5}}, & \nonumber \\
& \mbox{\eqref{eq:4d_cut_1} -- \eqref{eq:4d_cut_2}, \eqref{qc_2_lambda} -- \eqref{qc_3_lambda}, \eqref{eq:linking}} & \nonumber
\end{align}
\end{subequations}
\end{model} 

In the following subsection, we detail the theoretical properties of Model \ref{model:qc-opf-lambda-strengthened} and show that it is tighter than the QC relaxations in Model \ref{model:qc-opf} and \ref{model:qc-opf-lambda}.

\subsection{Theoretical Properties of the QC-TLM Relaxation} 
Before presenting the theoretical properties of the strengthened QC relaxation, we first expand constraints in Eq. \eqref{sij}, \eqref{wii}, and \eqref{wij} for a branch $(i,j) \in \mathcal E$ as follows:
\begin{subequations}
\begin{align}
& p_{ij} = \bm{g}_{ij} v_i^2  - \bm{g}_{ij} v_i v_j \cos \theta_{ij} - \bm{b}_{ij} v_i v_j \sin \theta_{ij} \\
& q_{ij} = - \bm{b}_{ij} v_i^2  - \bm{g}_{ij} v_i v_j \cos \theta_{ij} + \bm{b}_{ij} v_i v_j \sin \theta_{ij}
\end{align}
\label{eq:pq}
\end{subequations}
After applying the convex envelopes (C-CONV) and (S-CONV) for the cosine and sine terms, the Eq. \eqref{eq:pq} reduce to
\begin{subequations}
\begin{align}
& p_{ij} = \bm{g}_{ij} v_i^2  - \bm{g}_{ij} v_i v_j \widecheck{cs}_{ij} - \bm{b}_{ij} v_i v_j \widecheck{sn}_{ij} \label{eq:p_sc}\\
& q_{ij} = - \bm{b}_{ij} v_i^2  - \bm{g}_{ij} v_i v_j \widecheck{cs}_{ij} + \bm{b}_{ij} v_i v_j \widecheck{sn}_{ij} \label{eq:q_sc}
\end{align}
\label{eq:pq_sc}
\end{subequations}
Given Eq. \eqref{eq:pq_sc}, the strengthened QC relaxation has the following properties 
\begin{thm} \label{thm:conv_1}
The strengthened QC relaxation in Model \ref{model:qc-opf-lambda-strengthened} captures the convex hull of the nonlinear, non-convex term $\left(-\bm{g}_{ij} v_i v_j \widecheck{cs}_{ij} - \bm{b}_{ij} v_i v_j \widecheck{sn}_{ij}\right)$ in Eq. \eqref{eq:p_sc}.
\end{thm}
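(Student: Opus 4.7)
My plan is to treat the expression $g(v_i, v_j, \widecheck{cs}_{ij}, \widecheck{sn}_{ij}) := -\bm{g}_{ij} v_i v_j \widecheck{cs}_{ij} - \bm{b}_{ij} v_i v_j \widecheck{sn}_{ij}$ as a single multilinear function of four variables over the 4D box defined by their bounds. Since $g$ is affine in each argument separately, the convex hull of its graph admits an extreme-point description with a single probability vector $\mu$ of length $16$ indexed by the box vertices, in direct analogy with \eqref{eq:triform} and the result invoked in \cite{Rikun1997}. The goal is therefore to show that the projection of the feasible set of Model \ref{model:qc-opf-lambda-strengthened} onto $(v_i, v_j, \widecheck{cs}_{ij}, \widecheck{sn}_{ij})$ together with the implied value $-\bm{g}_{ij}\Re(W_{ij}) - \bm{b}_{ij}\Im(W_{ij})$ of the composite term coincides with this $16$-vertex convex hull.

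The forward inclusion is routine: given any $\mu$ realizing a point of the convex hull, I would set $\lambda^c_{ij}$ and $\lambda^s_{ij}$ to be the $3$-dimensional marginals of $\mu$ obtained by summing out $\widecheck{sn}_{ij}$ and $\widecheck{cs}_{ij}$, respectively. The extreme-point constraints embedded in \eqref{qc_2_lambda}--\eqref{qc_3_lambda} then hold by construction, and the linking constraint \eqref{eq:linking} reduces to the statement that the $\mu$-expectation of $v_i v_j$ computed from the two marginals agrees with itself.

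The substantive step is the reverse inclusion. Given any feasible $(\lambda^c_{ij}, \lambda^s_{ij})$, I would aggregate to the four $(v_i, v_j)$-corner weights $\alpha^c_k := \lambda^c_{ij,2k-1} + \lambda^c_{ij,2k}$ and $\alpha^s_k$ analogously for $k = 1, \dots, 4$. Writing $\Delta_k := \alpha^c_k - \alpha^s_k$, the shared-variable identities for $v_i$ and $v_j$ across \eqref{qc_2_lambda}--\eqref{qc_3_lambda}, combined with $\sum_k \lambda^c_{ij,k} = \sum_k \lambda^s_{ij,k} = 1$, force $\Delta_3 + \Delta_4 = 0$, $\Delta_2 + \Delta_4 = 0$, and $\sum_k \Delta_k = 0$, so that $\Delta_1 = \Delta_4 = -\Delta_2 = -\Delta_3 =: \delta$. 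Substituting these dependencies into \eqref{eq:linking} collapses the linking constraint to $\delta(\bm{v^u}_i - \bm{v^l}_i)(\bm{v^u}_j - \bm{v^l}_j) = 0$, which in the nondegenerate case forces $\delta = 0$ and therefore matches all four $(v_i, v_j)$-marginals of $\lambda^c_{ij}$ and $\lambda^s_{ij}$. Pinpointing this exact role of the single scalar equation \eqref{eq:linking}, disentangling it from the other $\lambda$-constraints, is what I expect to be the main obstacle.

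Once the $(v_i, v_j)$-marginals coincide, I would stitch $\lambda^c_{ij}$ and $\lambda^s_{ij}$ into a $16$-atom distribution $\mu$ via the conditional-product rule, setting $\mu(v_i^*, v_j^*, \widecheck{cs}^\ast, \widecheck{sn}^\ast) = \lambda^c_{ij}(v_i^*, v_j^*, \widecheck{cs}^\ast)\,\lambda^s_{ij}(v_i^*, v_j^*, \widecheck{sn}^\ast)/\alpha(v_i^*, v_j^*)$ wherever $\alpha > 0$ and $0$ otherwise. By construction the two $3$-dimensional marginals of $\mu$ recover $\lambda^c_{ij}$ and $\lambda^s_{ij}$ exactly, and a direct calculation then yields $-\bm{g}_{ij}\Re(W_{ij}) - \bm{b}_{ij}\Im(W_{ij}) = \sum_e \mu_e\, g(e)$, certifying the projected point as a member of the $16$-vertex convex hull. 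Degenerate cases (a coinciding voltage bound or a vanishing aggregated weight) reduce to lower-dimensional instances of the same construction and introduce no new ideas.
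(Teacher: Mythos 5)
Your proposal is correct and follows the same overall route as the paper's proof: both show that the projection of the strengthened relaxation onto $(z,v_i,v_j,\widecheck{cs}_{ij},\widecheck{sn}_{ij})$ equals the $16$-vertex extreme-point description of the convex hull, with the easy inclusion obtained by marginalizing the $16$ multipliers and the hard inclusion hinging on the fact that the linking constraint \eqref{eq:linking}, combined with the shared $v_i,v_j$ representation constraints and the unit-sum conditions, forces the four $(v_i,v_j)$-corner masses of $\lambda^c_{ij}$ and $\lambda^s_{ij}$ to coincide; your $\delta=0$ computation is exactly the content of Lemma~\ref{lem:four_equations_from_one}, derived by explicit elimination rather than the paper's rank argument. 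The one step where you genuinely deviate is the reconstruction of the $16$ multipliers: the paper writes down an explicit min/max coupling \eqref{eq:lambda_backward} and must then verify nonnegativity componentwise (Lemma~\ref{lem:four_equations_from_one} is invoked a second time there, to show $\lambda_{2|4}\geqslant 0$), whereas you glue $\lambda^c_{ij}$ and $\lambda^s_{ij}$ through the conditional-product coupling $\lambda^c\lambda^s/\alpha$ on each corner, which makes nonnegativity, the unit sum, and recovery of both three-dimensional marginals immediate. The product coupling is arguably the cleaner construction, since it removes the case analysis, at the cost of a division that needs the (harmless) zero-mass convention you state; both arguments require the nondegeneracy $\bm{v^l}_i<\bm{v^u}_i$ and $\bm{v^l}_j<\bm{v^u}_j$, which you flag explicitly and which the paper assumes implicitly when it declares the four corner points distinct.
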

\begin{proof} See Sec. \ref{sec:proof}.  \smartqed
\end{proof}
\begin{thm} \label{thm:conv_2}
The strengthened QC relaxation in Model \ref{model:qc-opf-lambda-strengthened} captures the convex hull of the nonlinear, non-convex term $\left(- \bm{g}_{ij} v_i v_j \widecheck{cs}_{ij} + \bm{b}_{ij} v_i v_j \widecheck{sn}_{ij}\right)$ in Eq. \eqref{eq:q_sc}.
\end{thm}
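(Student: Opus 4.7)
The plan is to reduce Theorem \ref{thm:conv_2} to Theorem \ref{thm:conv_1} via a sign-flip substitution that exploits the symmetry of the extreme-point envelope and of the linking constraint \eqref{eq:linking}. Since Theorem \ref{thm:conv_1} already establishes the convex hull property for the sum of two trilinear terms sharing the bilinear factor $v_i v_j$, the only remaining task is to show that the expression in Theorem \ref{thm:conv_2} is an instance of the same form after a coordinate change that leaves the strengthened QC relaxation invariant.

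First, I would introduce the auxiliary variable $\widetilde{sn}_{ij} := -\widecheck{sn}_{ij}$, with transformed bounds $[-\bm{sn}^u_{ij}, -\bm{sn}^l_{ij}]$. Under this substitution, the expression in Eq. \eqref{eq:q_sc} becomes $-\bm{g}_{ij} v_i v_j \widecheck{cs}_{ij} - \bm{b}_{ij} v_i v_j \widetilde{sn}_{ij}$, which is exactly of the form treated in Theorem \ref{thm:conv_1}. The next step is to verify that the trilinear extreme-point envelope is covariant under this substitution, i.e., that $\langle v_i v_j \widecheck{sn}_{ij}\rangle^{\lambda^s_{ij}}$ equals $-\langle v_i v_j \widetilde{sn}_{ij}\rangle^{\widetilde{\lambda}^s_{ij}}$ under the index permutation $\sigma$ that swaps the third coordinate's low and high extreme values (pairing $\bm{\xi_1}\leftrightarrow\bm{\xi_2}$, $\bm{\xi_3}\leftrightarrow\bm{\xi_4}$, $\bm{\xi_5}\leftrightarrow\bm{\xi_6}$, $\bm{\xi_7}\leftrightarrow\bm{\xi_8}$) and sets $\widetilde{\lambda}^s_{ij,k} = \lambda^s_{ij,\sigma(k)}$. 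This covariance follows directly from the definition of the $\lambda$-formulation in \eqref{eq:triform} together with the fact that $\varphi$ is linear in the flipped coordinate.

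Next, I would confirm that the linking constraint \eqref{eq:linking} is invariant under the permutation $\sigma$. Each row of the coefficient vector in \eqref{eq:linking} is of the form $\lambda^c_{ij,2k-1}+\lambda^c_{ij,2k}-\lambda^s_{ij,2k-1}-\lambda^s_{ij,2k}$ for $k=1,\dots,4$; in other words, the multipliers are grouped precisely in pairs that $\sigma$ permutes internally, so the pair-sums $\lambda^s_{ij,2k-1}+\lambda^s_{ij,2k}$ are unchanged when rewritten in terms of $\widetilde{\lambda}^s_{ij}$. Hence the linking constraint in the $(\lambda^c_{ij},\widetilde{\lambda}^s_{ij})$ variables has the same algebraic form as in the original $(\lambda^c_{ij},\lambda^s_{ij})$ variables. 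Applying Theorem \ref{thm:conv_1} to the transformed term and then undoing the substitution gives the convex hull of $\left(-\bm{g}_{ij}v_i v_j \widecheck{cs}_{ij} + \bm{b}_{ij} v_i v_j \widecheck{sn}_{ij}\right)$.

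The main obstacle will be the combinatorial bookkeeping required to make the permutation argument airtight: one must carefully check that the extreme-point evaluations $\varphi(\bm{\xi_k}^1,\bm{\xi_k}^2,\bm{\xi_k}^3)$ transform with the correct sign, that the equality $x_i = \sum_k \lambda_k \bm{\xi_k}^i$ for the voltage variables $v_i$ and $v_j$ (coordinates $i=1,2$) is preserved by $\sigma$, and that the pairing in \eqref{eq:linking} is aligned with the specific indexing in \eqref{eq:extreme_points}. Once this verification is completed, the theorem follows from Theorem \ref{thm:conv_1} with essentially no additional geometric content.
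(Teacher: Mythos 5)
Your proposal is correct, but it takes a different route from the paper. The paper does not derive Theorem \ref{thm:conv_2} from Theorem \ref{thm:conv_1} by symmetry; instead it proves a single general statement for $\phi(x_1,x_2,x_3,x_4)=\bm{\alpha_c}x_1x_2x_3+\bm{\alpha_s}x_1x_2x_4$ with \emph{arbitrary} real coefficients $\bm{\alpha_c},\bm{\alpha_s}$ and arbitrary box bounds, via an explicit two-way construction of multipliers (Eq.~\eqref{eq:lambda_forward}--\eqref{eq:lambda_backward} together with Lemma~\ref{lem:four_equations_from_one}), so both theorems are the instances $\bm{\alpha_s}=-\bm{b}_{ij}$ and $\bm{\alpha_s}=+\bm{b}_{ij}$ of one proof. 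Your sign-flip reduction $\widetilde{sn}_{ij}:=-\widecheck{sn}_{ij}$ is sound: the pair-swap $\sigma$ you describe is exactly the relabeling induced by dictionary ordering of the reflected box, $\varphi$ is odd in the flipped coordinate so the envelope value negates while the $v_i,v_j$ equations are untouched, and the linking constraint \eqref{eq:linking} only sees the pair sums $\lambda^s_{ij,2k-1}+\lambda^s_{ij,2k}$, which $\sigma$ preserves; since the affine map $(z,v_i,v_j,\widecheck{cs}_{ij},\widecheck{sn}_{ij})\mapsto(z,v_i,v_j,\widecheck{cs}_{ij},-\widecheck{sn}_{ij})$ is invertible, it commutes with taking convex hulls and the conclusion follows. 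The one caveat is that your argument needs Theorem \ref{thm:conv_1} in a form valid for arbitrary bounds on the third variable (negating $\widecheck{sn}_{ij}$ replaces its interval by $[-\bm{sn^u}_{ij},-\bm{sn^l}_{ij}]$), which the theorem statement does not literally assert but the paper's general proof does supply. What your approach buys is a corollary-style derivation with no new multiplier construction, at the price of the permutation bookkeeping you acknowledge; what the paper's approach buys is that the coefficient-agnostic proof makes any such symmetry argument unnecessary.
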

\begin{proof}
See Sec. \ref{sec:proof}. \smartqed
\end{proof}
\begin{figure}[htp]
  \centering
%   \captionsetup{font=small}
  \includegraphics[scale=0.25]{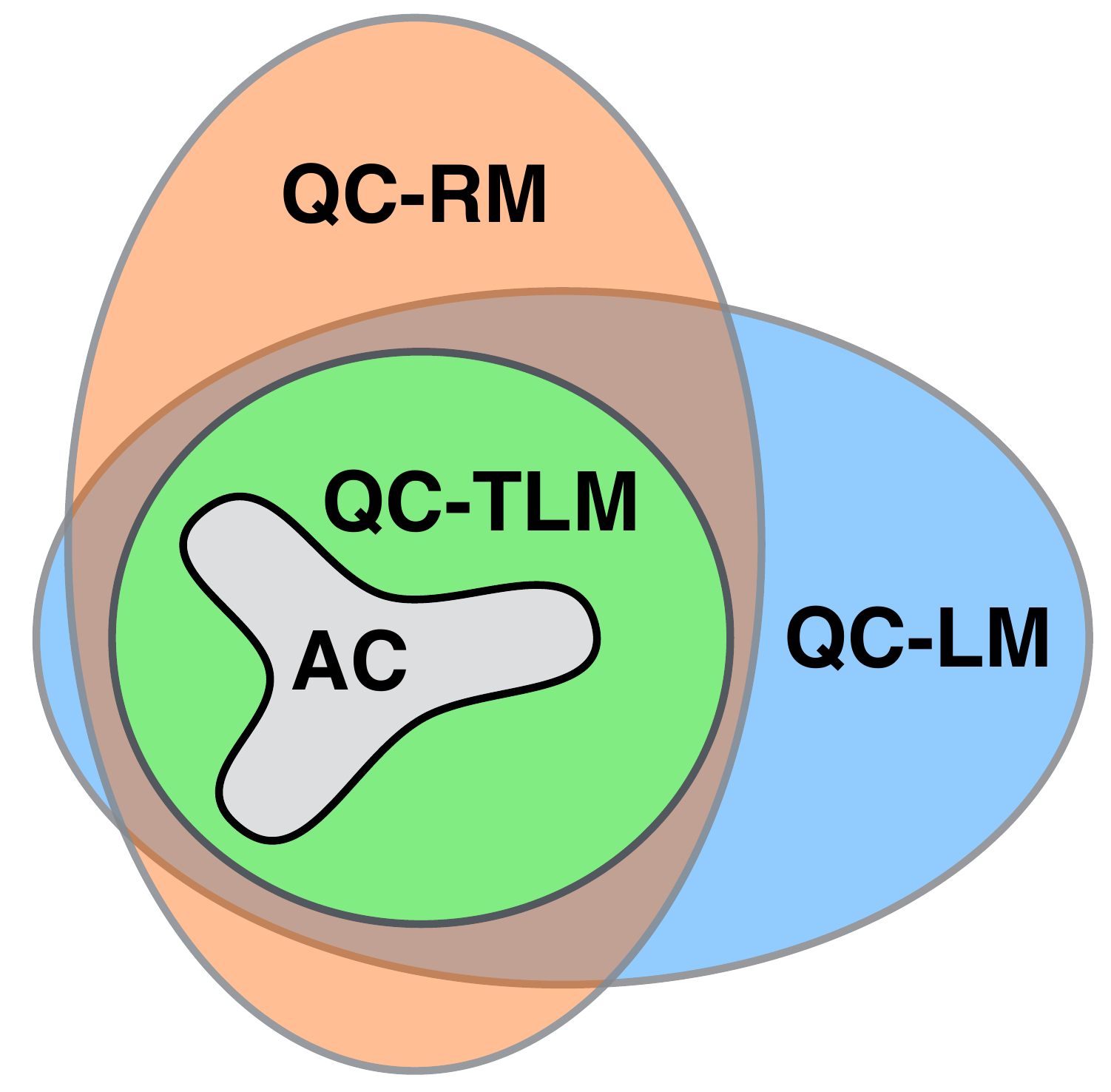}
  \caption{A Venn diagram representing the feasible sets of QC relaxation with various trilinear term relaxations (set sizes in this illustration are not to scale).}
  \label{fig:venn}
\end{figure}

The theoretical properties of the QC relaxations considered here are summarized in Figure \ref{fig:venn}.  
% Namely, the experimental results provide examples to establish that there is no dominance relaxation between QC-RM and QC-LM and Theorems \ref{thm:conv_1} and \ref{thm:conv_2} establish that QC-TLM is strictly tighter than both QC-RM and QC-LM.  
To the best of our knowledge, the theoretical results connecting the summation of multilinear terms presented in this paper are new and novel in the global optimization literature. The computational impact of Theorems \ref{thm:conv_1} and \ref{thm:conv_2} are presented in Section \ref{sec:results}. 

Before, we present the proof of Theorems \ref{thm:conv_1} and \ref{thm:conv_2}, we present some results of a computational experiment comparing the three relaxations namely the Recursive-McCormick (RM), the $\lambda$-based formulation (LM), and the tightened $\lambda$-based formulation (TLM) applied to the sum of two multilinear terms $\phi(x_1, x_2, x_3, x_4) = x_1 x_2 x_3 + x_1 x_2 x_4$ defined on $x_i \in [0, 1]$ for every $i\in \{1, 2, 3, 4\}$. To that end, we randomly generate $5000$ points uniformly in the domain $[0,1]^4$ and for each point $\bm x^k$, calculate the difference between the upper and lower bounds of $\phi(x_1, x_2, x_3, x_4)$ as defined by RM, LM, and TLM formulations. We denote these differences by $\text{RMgap}(\bm x^k)$, $\text{LMgap}(\bm x^k)$, and $\text{TLMgap}(\bm x^k)$, respectively. We then construct two scatter plots, shown in Figure \ref{fig:scatter}, of the points $(\text{LMgap}(\bm x^k), \text{RMgap}(\bm x^k))$ and $(\text{TLMgap}(\bm x^k), \text{RMgap}(\bm x^k))$ for all $k = 1, \dots, 5000$, respectively. 
\begin{figure}[htpb]
\centering{}
\subfloat[RM gap vs. LM gap\label{fig:scatter-1}]{\includegraphics[scale=0.5]{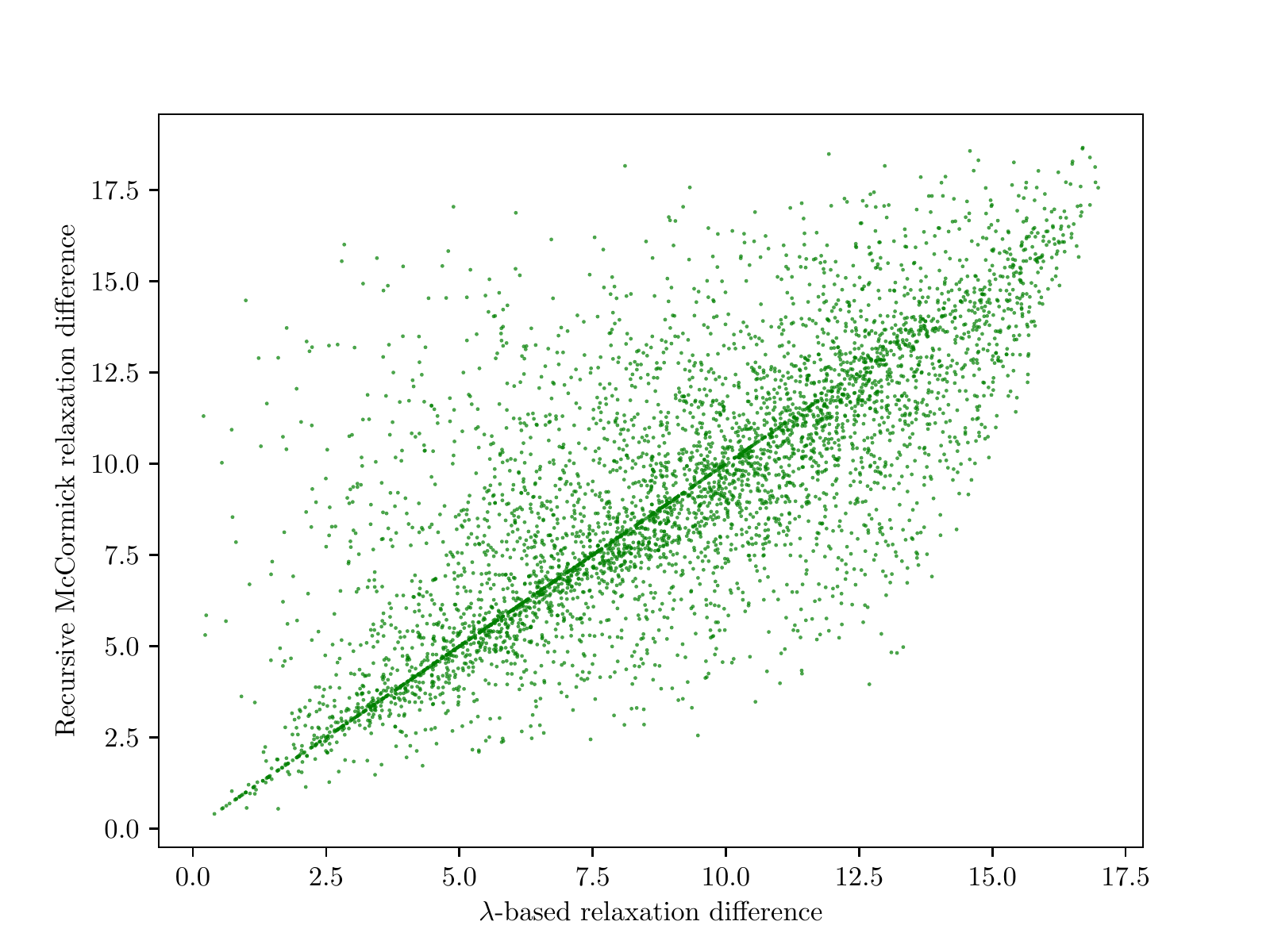}}\\
\subfloat[RM gap vs. TLM gap\label{fig:scatter-2}]{\includegraphics[scale=0.5]{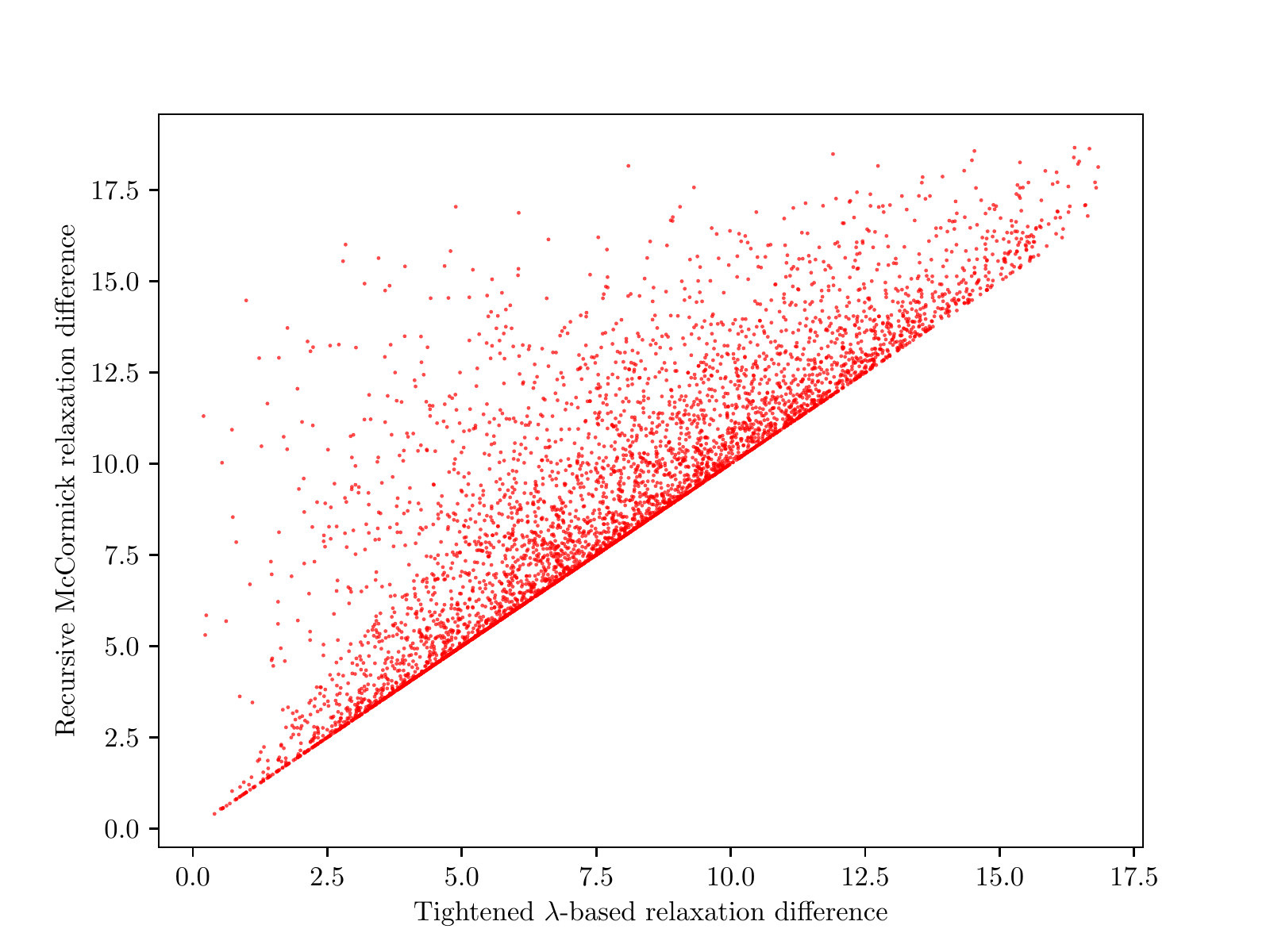}}
\caption{Scatter plots of the gaps obtained using the three relaxations for the sum of trilinear terms $\phi(x_1, x_2, x_3, x_4)$ in the domain $[0,1]^4$.}
\label{fig:scatter}
\end{figure}
The points in scatter plot shown in Fig. \ref{fig:scatter-1}, lie above or below the line of unit slope indicating that there are instances where either relaxation i.e., RM or LM, can be stronger than the other. In contrast, all the points in Fig. \ref{fig:scatter-2} lie above the line of unit slope indicating that the TLM relaxation is always better than RM. Between LM and TLM relaxations, it is clear from the definition of these relaxations that TLM is stronger than LM. In the next section, we present a proof that TLM indeed captures the convex hull of the sum of trilinear terms with two shared variables.  

\subsection{Proof of Theorem \ref{thm:conv_1} and Theorem \ref{thm:conv_2}} \label{sec:proof}
To keep the proof general, we shall present it for the sum of trilinear terms $\phi(x_1, x_2, x_3, x_4) = \bm{\alpha_c} x_1 x_2 x_3 + \bm{\alpha_s} x_1 x_2 x_4$ where $\bm{\alpha_c}, \bm{\alpha_s} \in \mathbb R$ and $\bm{x^l}_i \leqslant x_i \leqslant \bm{x^u}_i$, $i = 1,2,3,4$. The convex hull of $z = \phi(x_1, x_2, x_3, x_4)$ is given by 
\begin{equation} 
\label{eq:cvx_quadform}
\mathcal{S} = 
\begin{cases}
z =  \sum_{k=1}^{16} \lambda_k \phi(\bm{\gamma_k}) \\
x_i = \sum_{k=1}^{16} \lambda_k \, \bm{\gamma_k}^i \;\; \forall i=1,2,3,4 \\
\sum_{k=1}^{16} \lambda_k = 1, \;\; \lambda_k \geqslant 0 \;\; \forall k=1,\dots,16
\end{cases}
\end{equation}

Let $\bm \gamma^c = \langle \bm{\gamma_1^c},\dots,\bm{\gamma_8^c}\rangle$ and $\bm \gamma^s= \langle \bm{\gamma_1^s},\dots,\bm{\gamma_1^s}\rangle$ and $\bm \gamma = \langle \bm{\gamma_1},\dots,\bm{\gamma_{16}}\rangle$ denote the  extreme points of $[\bm{x^l}_1, \bm{x^u}_1] \times [\bm{x^l}_2, \bm{x^u}_2] \times [\bm{x^l}_3, \bm{x^u}_3]$ and $[\bm{x^l}_1, \bm{x^u}_1] \times [\bm{x^l}_2, \bm{x^u}_2] \times [\bm{x^l}_4, \bm{x^u}_4]$, and $[\bm{x^l}_1, \bm{x^u}_1] \times [\bm{x^l}_2, \bm{x^u}_2] \times [\bm{x^l}_3, \bm{x^u}_3]\times [\bm{x^l}_4, \bm{x^u}_4]$, respectively. The extreme points in $\bm \gamma^c$, $\bm \gamma^s$ and $\bm \gamma$ 
are ordered similar to the extreme points in Eq. \eqref{eq:extreme_points}, i.e., in dictionary order. The strengthened QC relaxation represents the term $z = \phi(x_1, x_2, x_3, x_4)$ by using the following equations:
\begin{equation}    
\label{eq:tightQC_general}
\mathcal{S}_{QC} = 
\begin{cases}
    & z_{c} = \langle x_1 x_2 x_3 \rangle^{\lambda^c}, \quad z_{s} = \langle x_1 x_2 x_4 \rangle^{\lambda^s} \\
    & z = \bm{\alpha_c} z_{c} + \bm{\alpha_s} z_{s} \\
    & \text{Eq. \eqref{eq:linking}} \ \mbox{with} \ x_1 \equiv {v_i}, \ x_2 \equiv {v_j}.
\end{cases}
\end{equation}
We show that the projection of \eqref{eq:tightQC_general} on to the $(z,x_1,x_2,x_3,x_4)$ space is identical to its convex hull given in \eqref{eq:cvx_quadform}.

Let $(z,x_1,x_2,x_3,x_4) \in \mathcal{S}$ and let $(\lambda_i) \ i=1,\ldots, 16$ be the corresponding multipliers.
Set 
\begin{subequations} \label{eq:lambda_forward}
\begin{align}   
&\lambda_1^c = \lambda_1 + \lambda_2 \quad &\lambda_1^s &= \lambda_1 + \lambda_3 \\
&\lambda_2^c = \lambda_3 + \lambda_4 \quad &\lambda_2^s &= \lambda_2 + \lambda_4 \\
&\lambda_3^c = \lambda_5 + \lambda_6 \quad &\lambda_3^s &= \lambda_5 + \lambda_7 \\
&\lambda_4^c = \lambda_7 + \lambda_8 \quad &\lambda_4^s &= \lambda_6 + \lambda_8 \\
&\lambda_5^c = \lambda_9 + \lambda_{10} \quad &\lambda_5^s &= \lambda_9 + \lambda_{11} \\
&\lambda_6^c = \lambda_{11} + \lambda_{12} \quad &\lambda_6^s &= \lambda_{10} + \lambda_{12} \\
&\lambda_7^c = \lambda_{13} + \lambda_{14} \quad &\lambda_7^s &= \lambda_{13} + \lambda_{15} \\
&\lambda_8^c = \lambda_{15} + \lambda_{16} \quad &\lambda_8^s &= \lambda_{14} + \lambda_{16}.
\end{align}
\end{subequations}
With the above assignment, it is easy to check that $\lambda_i^c, \lambda_i^s \geq 0$ and $\sum_i \lambda_i^c = \sum_i \lambda_i^s = 1$ and $\bm{\alpha_c} \sum_i \lambda_i^c \gamma_i^c + \bm{\alpha_s} \sum_i \lambda_i^s \gamma_i^s = \sum_i \lambda_i \gamma_i$. This shows that  $(z,x_1,x_2,x_3,x_4) \in \mathcal{P}(\mathcal{S}_{QC})$ and hence that $\mathcal{S} \subseteq \mathcal{P}(\mathcal{S}_{QC})$.

Now, let $(z,z_c,z_s,x_1,x_2,x_3,x_4) \in \mathcal{S}_{QC}$. Let
\begin{subequations}    \label{eq:lambda_backward}
\begin{align}
    \lambda_{1|4} &= \lambda_{\text{odd}}^s - \lambda_{\text{even}}^c + \max\{\lambda_{\text{even}}^c - \lambda_{\text{odd}}^s    , 0\} \\
    \lambda_{2|4} &= \lambda_{\text{even}}^s - \max\{\lambda_{\text{even}}^c - \lambda_{\text{odd}}^s    , 0\} \\
    \lambda_{3|4} &= \lambda_{\text{even}}^c - \max\{\lambda_{\text{even}}^c - \lambda_{\text{odd}}^s    , 0\}  \\
    \lambda_{4|4} &= 0 + \max\{\lambda_{\text{even}}^c - \lambda_{\text{odd}}^s    , 0\}
\end{align}
\end{subequations}
where, $\lambda_{i|4} = (\lambda_i, \lambda_{i+4}, \lambda_{i+8}, \lambda_{i+12})$. Also, $\lambda^s_{\text{even}}$ represents the vector of $\lambda^s$ variables with even indices arranged in sorted order and the vectors $\lambda^s_{\text{odd}}$, $\lambda^c_{\text{even}}$, and $\lambda^c_{\text{odd}}$ are defined in a similar manner.
By construction, the above assignment of $\lambda$ satisfies the system of equations in \eqref{eq:lambda_forward}. As a result, we have $\bm{\alpha_c} \sum_i \lambda_i^c \gamma_i^c + \bm{\alpha_s} \sum_i \lambda_i^s \gamma_i^s = \sum_i \lambda_i \gamma_i$.
Further, $\sum_{i}\lambda_i = \sum_{i} \lambda_i^s = 1$. What is left is to show that $\lambda_i \geq 0 \ \forall i$.
We can rewrite \eqref{eq:lambda_backward} as
\begin{subequations}    \label{eq:lambda_backward_positive}
\begin{align}
    \lambda_{1|4} &= \max\{\lambda_{\text{odd}}^s - \lambda_{\text{even}}^c,0\} \label{eq:l1} \\
    \lambda_{2|4} &=  \min\{\lambda_{\text{even}}^s + \lambda_{\text{odd}}^s  -\lambda_{\text{even}}^c, \lambda_{\text{even}}^s \}  \label{eq:l2} \\
    \lambda_{3|4} &=  \min\{\lambda_{\text{odd}}^s,\lambda_{\text{even}}^c \} \label{eq:l3} \\
    \lambda_{4|4} &= \max\{\lambda_{\text{even}}^c - \lambda_{\text{odd}}^s    , 0\}. \label{eq:l4}
\end{align}
\end{subequations}
We observe that the expressions on the RHS of Equations~\eqref{eq:l1},\eqref{eq:l3} and \eqref{eq:l4} are positive. To show that $\lambda_{2|4}$ in \eqref{eq:l2} is positive, we need the following additional result.
\begin{lem} \label{lem:four_equations_from_one}
The multipliers $\lambda^c$ and $\lambda^s$ satisfy
\begin{align}
    \lambda^c_{\text{even}} +  \lambda^c_{\text{odd}} = \lambda^s_{\text{even}} +  \lambda^s_{\text{odd}}.
\end{align}
\end{lem}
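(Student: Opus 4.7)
The plan is to reduce the identity to a $4\times 4$ linear system in a single auxiliary vector and show that the system has only the trivial solution under the standing non-degeneracy of the voltage bounds. For $k=1,2,3,4$, introduce the marginals
\begin{equation*}
\mu^c_k := \lambda^c_{ij,2k-1} + \lambda^c_{ij,2k}, \qquad \mu^s_k := \lambda^s_{ij,2k-1} + \lambda^s_{ij,2k},
\end{equation*}
so that the sums in the statement are exactly $\mu^c$ and $\mu^s$: concretely, $\lambda^c_{\text{odd}}+\lambda^c_{\text{even}}=(\mu^c_1,\mu^c_2,\mu^c_3,\mu^c_4)$, and similarly on the sine side. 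The lemma thus becomes $\mu^c=\mu^s$. The natural interpretation is that $\mu^c$ and $\mu^s$ are two convex combinations over the four corners of $[\bm{v^l}_i,\bm{v^u}_i]\times[\bm{v^l}_j,\bm{v^u}_j]$ obtained from the trilinear extreme-point representations \eqref{qc_2_lambda} and \eqref{qc_3_lambda} by marginalizing out the third coordinate ($\widecheck{cs}_{ij}$ or $\widecheck{sn}_{ij}$).

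Next I would collect four scalar equations on the difference $\nu:=\mu^c-\mu^s$. Three of them are implicit in \eqref{eq:triform} applied to the two trilinear terms in \eqref{eq:tightQC_general} via the shared variables $x_1\equiv v_i$ and $x_2\equiv v_j$: equating $\sum_k\lambda^c_{ij,k}=\sum_k\lambda^s_{ij,k}=1$ gives $\nu_1+\nu_2+\nu_3+\nu_4=0$; equating the two expansions of $v_i$ gives $\bm{v^l}_i(\nu_1+\nu_2)+\bm{v^u}_i(\nu_3+\nu_4)=0$; and equating the two expansions of $v_j$ gives $\bm{v^l}_j(\nu_1+\nu_3)+\bm{v^u}_j(\nu_2+\nu_4)=0$. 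The fourth equation is precisely \eqref{eq:linking}, which in the $\nu$ variables reads $\bm{v^l}_i\bm{v^l}_j\nu_1+\bm{v^l}_i\bm{v^u}_j\nu_2+\bm{v^u}_i\bm{v^l}_j\nu_3+\bm{v^u}_i\bm{v^u}_j\nu_4=0$.

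The final step is elementary elimination. Subtracting $\bm{v^l}_i$ times the first equation from the second yields $(\bm{v^u}_i-\bm{v^l}_i)(\nu_3+\nu_4)=0$, so $\nu_3+\nu_4=\nu_1+\nu_2=0$ under $\bm{v^l}_i<\bm{v^u}_i$; the symmetric manipulation of the third equation gives $\nu_1+\nu_3=\nu_2+\nu_4=0$. Together these pairwise-sum conditions force $\nu$ into the one-parameter family $\nu_1\,(1,-1,-1,1)$. Plugging this into the linking equation collapses it to $\nu_1\,(\bm{v^u}_i-\bm{v^l}_i)(\bm{v^u}_j-\bm{v^l}_j)=0$, so $\nu_1=0$ and therefore $\mu^c=\mu^s$, which is the lemma.

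The main obstacle is organizational rather than analytical: aligning the dictionary ordering of the eight trilinear extreme points in \eqref{eq:extreme_points} with the odd/even notation used in the statement, and confirming that marginalizing out the non-shared third coordinate in each representation produces precisely the four $(v_i,v_j)$-corner products $\bm{v^l}_i\bm{v^l}_j,\bm{v^l}_i\bm{v^u}_j,\bm{v^u}_i\bm{v^l}_j,\bm{v^u}_i\bm{v^u}_j$ that appear in the linking constraint \eqref{eq:linking}. Once this bookkeeping is settled, the rest is a short Vandermonde-style argument.
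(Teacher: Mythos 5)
Your proof is correct and follows essentially the same route as the paper: both reduce the claim to a linear system in the marginal difference $\nu = \mu^c - \mu^s$ obtained from the shared $v_i,v_j$ expansions in \eqref{eq:triform} together with the linking constraint \eqref{eq:linking}, and conclude $\nu = 0$ under nondegenerate voltage bounds. If anything, your explicit inclusion of the normalization row $\nu_1+\nu_2+\nu_3+\nu_4=0$ and the step-by-step elimination is slightly more complete than the paper's terse rank-$3$ argument in \eqref{eq:unique_cvx}, which relies on that same sum-to-one condition implicitly.
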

Using Lemma~\ref{lem:four_equations_from_one}, we see that $\lambda_{\text{even}}^s + \lambda_{\text{odd}}^s  -\lambda_{\text{even}}^c = \lambda^c_{\text{odd}}$, and thus $\lambda_{2|4} =  \min\{\lambda^c_{\text{odd}}, \lambda_{\text{even}}^s \} \geq 0$ and proof of the theorem is complete.

\subsubsection{Proof of Lemma~\ref{lem:four_equations_from_one}}
Using Eq~\eqref{eq:triform} and the coupling constraint in \eqref{eq:linking}, we conclude that $\lambda^c,\lambda^s$ satisfy the following constraint.
\begin{align}   \label{eq:unique_cvx}
    \begin{pmatrix}
        \bm{x^l}_1 & \bm{x^l}_2 &  \bm{x^l}_1 \bm{x^l}_2\\
        \bm{x^l}_1 & \bm{x^u}_2 &  \bm{x^l}_1 \bm{x^u}_2\\
        \bm{x^u}_1 & \bm{x^l}_2 &  \bm{x^u}_1 \bm{x^l}_2\\
        \bm{x^u}_1 & \bm{x^u}_2 &  \bm{x^u}_1 \bm{x^u}_2
    \end{pmatrix} 
    \begin{pmatrix}
        \lambda^c_{\text{odd}} +  \lambda^c_{\text{even}} - \lambda^s_{\text{odd}} -  \lambda^s_{\text{even}}
    \end{pmatrix} = 0.
\end{align}
Since the four extreme points appearing in the rows of the matrix in LHS of \eqref{eq:unique_cvx} are assumed to be distinct, the matrix is full rank ($3$) and we must have $ \lambda^c_{\text{odd}} +  \lambda^c_{\text{even}} - \lambda^s_{\text{odd}} -  \lambda^s_{\text{even}} = 0$.

\section{Optimization-Based Bound Tightening} \label{sec:obbt}
We now present an Optimization-Based Bound Tightening (OBBT) algorithm that can be applied to any convex relaxation of the AC-OPF problem with voltage magnitude and phase angle difference variables and is aimed at tightening the bounds on these variables. It has been observed in \cite{Coffrin2015,Chen2016} that the SDP and QC relaxations of AC-OPF  benefit substantially with tight variable bounds. The algorithm proceeds as follows: Let $\Omega$ denote the feasible set of any one of the QC relaxations of the AC-OPF problem presented in this article. Then, two optimization problems, one for each variable in the set $\mathcal V = \{v_i \, \forall i \in \mathcal N, \theta_{ij} \, \forall (i,j) \in \mathcal E\}$ are solved to find the maximum and minimum value of the variable subject to the constraints in $\Omega$. Observe that each optimization problem is convex and upon computing tighter variable bounds for each variable in the set $\mathcal V$, a new, tighter QC relaxation is constructed, if any bound has changed. This process is repeated until a fixed point is reached, i.e., none of the variable bounds change between subsequent iterations. A pseudo-code of the OBBT algorithm is given in Algorithm \ref{alg:obbt}.

\begin{algorithm}
    % \setstretch{1.1}
  \caption{The OBBT Algorithm}
  \label{alg:obbt}
  \begin{algorithmic}[1]
    \vspace{1ex}
    \Input A QC Relaxation (Model \ref{model:qc-opf}/\ref{model:qc-opf-lambda}/\ref{model:qc-opf-lambda-strengthened}) to construct $\Omega$
    \Output $\bm{v^l}$, $\bm{v^u}$, $\bm{\theta^l}$, $\bm{\theta^u}$
    \Repeat 
        \State $\bm{v^{l0}}, \bm{v^{u0}}, \bm{\theta^{l0}}, \bm{\theta^{u0}} \gets \bm{v^l}, \bm{v^u}, \bm{\theta^l}, \bm{\theta^u}$
        \State $\Omega \gets$ QC relaxation given $\bm{v^{l0}}, \bm{v^{u0}}, \bm{\theta^{l0}}, \bm{\theta^{u0}}$
        \ForAll{$i \in \mathcal N$}
            \State $\bm{v^l}_i \gets \min\{v_i : \Omega \}$ \label{step:6}
            \State $\bm{v^u}_i \gets \max\{v_i : \Omega \}$ \label{step:7}
        \EndFor
        \ForAll{$(i,j) \in \mathcal E$}
            \State $\bm{\theta^l}_{ij} \gets \min\{\theta_{ij} : \Omega \}$ \label{step:8}
            \State $\bm{\theta^u}_{ij} \gets \max\{\theta_{ij} : \Omega \}$ \label{step:9}
        \EndFor
    \Until{ $\bm{v^{l0}}, \bm{v^{u0}}, \bm{\theta^{l0}}, \bm{\theta^{u0}} = \bm{v^l}, \bm{v^u}, \bm{\theta^l}, \bm{\theta^u}$}
  \end{algorithmic}
\end{algorithm}

\subsection{OBBT for Global Optimization}
The value of using the OBBT algorithm for characterizing the AC-OPF feasibility set was originally highlighted in \cite{Coffrin2015}.  However, recent works have noticed that if the primary goal is to improve the objective lower bound of the AC-OPF problem, then adding the following additional, convex, upper bound constraint to $\Omega$ can vastly improve the algorithm \cite{Lu2018,Liu2018}:
\begin{align}
    \sum_{i\in \cal{G}} \bm{c}_{2i}(\fr{R}(S_i^g)^2) + \bm{c}_{1i}\fr{R}(S_i^g) + \bm{c}_{0i} \leqslant f^* \label{eq:ub-constraint}
\end{align}
where, $f^*$ denotes the cost of any feasible AC-OPF solution or in particular, a local optimal AC-OPF solution. The additional constraint reduces the search space for each convex optimization problem solved during the OBBT algorithm and is routinely used in the global optimization literature \cite{Nagarajan2019,Nagarajan2016}.  We refer to the version of Algorithm \ref{alg:obbt} that includes constraint \eqref{eq:ub-constraint} as GO-OBBT.

\section{Numerical Results}
\label{sec:results}
This section highlights the computational differences of the proposed QC relaxations (i.e. QC-RM, QC-LM, and QC-TLM) via two detailed numerical studies.  The first study revisits the OBBT algorithm from \cite{Coffrin2015} and demonstrates that QC-TLM provides tighter voltage and voltage angle bounds with a negligible change in runtime.  The second study explores the effectiveness of the QC relaxations for providing lower bounds on the AC-OPF both with and without bound tightening.

\subsection{Test Cases and Computational Setting}
This study focuses on 57 networks from the IEEE PES PGLib AC-OPF v18.08 benchmark library \cite{pglib}, which are all under 1000 buses.  Larger cases were not considered due to the computational burden of running the OBBT algorithm on such cases.  All of QC relaxations and the OBBT algorithms were implemented in Julia v0.6 using the optimization modeling layer JuMP.jl v0.18 \cite{Dunning2017}. All of the implementations are available as part of the open-source julia package PowerModels.jl v0.8 \cite{Coffrin2018a}.
Individual non-convex AC-OPF problems and convex QC-OPF relaxations were solved with Ipopt \cite{Ipopt} using the HSL-MA27 linear algebra solver.  The convex relaxations in the OBBT algorithms were solved with Gurobi v8.0 \cite{gurobi} for improved performance and numerical accuracy.  All solvers were set to optimally tolerance of $10^{-6}$ and the OBBT algorithm was configured with a minimum bound width and an average improvement tolerance \cite{Coffrin2018a} of $10^{-3}$ and $10^{-4}$, respectively.  Finally, all of the algorithms were evaluated on HPE ProLiant XL170r servers with two Intel 2.10 GHz CPUs and 128 GB of memory.

\subsection{Computing AC-OPF Feasibility Sets}
Table \ref{tbl:bt_quality} presents the bound improvements from running OBBT (Algorithm \ref{alg:obbt}) with the proposed QC relaxations.  For each network considered, the table reports: (1) the average voltage magnitude bound range (i.e. $\sum_{i \in \mathcal N} (\bm{v^u}_i - \bm{v^l}_i) / |\mathcal N|$); (2) the average voltage angle difference bound range (i.e. $\sum_{(i,j) \in \mathcal E} (\bm{\theta^u}_{ij} - \bm{\theta^l}_{ij}) / |\mathcal E|$) and; (3) the number of branches where the sign of the $\theta$ is fixed (i.e. $\sum_{(i,j) \in \mathcal E} (\bm{\theta^u}_{ij} \leq 0 \vee \bm{\theta^l}_{ij} \geq 0)$).  Bold text is used to highlight the best result in each row of the table.

The results in Table \ref{tbl:bt_quality} highlight the theoretical result showing that the QC-TLM relaxation dominates both the QC-RM and QC-LM relaxations.  It also provides examples where QC-LM dominates QC-RM (e.g. case118\_ieee) and QC-RM dominates QC-LM (e.g. case39\_epri).   It is important to note, that although the differences in average range values may be very small, these lead to significant range reductions when considered across the network.

Figure \ref{fig:bt_rt} presents the distribution of OBBT runtimes for the various QC relaxations presented in Table \ref{tbl:bt_quality}, both in terms of total runtime and an ideal parallel runtime.  These results highlight that there is no significant difference in the runtime of the QC relaxations considered here.   

%
% github.lanlytics.com:cjc/obbt-experiments.git
% pub-bt-tbl.py
% pub-bt.tex
% 
%%%%%%%%%%%%%%%%%

Table as of 08/2018
\begin{table*}[h!]
\centering
\footnotesize
\caption{The Quality of QC Relaxations on OBBT Computations.}
\begin{tabular}{|r|r|r||r|r|r||r|r|r||r|r|r|r|r|r|r|r|r|}
\hline
& & & \multicolumn{3}{c||}{Average Vm Range} & \multicolumn{3}{c||}{Average Td Range} & \multicolumn{3}{c|}{Td Sign} \\
Case & $|N|$ & $|E|$ & RM & LM & TLM & RM & LM & TLM & RM & LM & TLM \\
\hline
\hline
\multicolumn{12}{|c|}{Typical Operating Conditions (TYP)} \\
\hline
case3\_lmbd & 3 & 3 & 0.2000 & 0.2000 & 0.2000 & 0.4364 & {\bf 0.4361 } & {\bf 0.4361 } & 2 & 2 & 2 \\
\hline
case5\_pjm & 5 & 6 & 0.1981 & 0.1981 & 0.1981 & 0.0718 & 0.0716 & {\bf 0.0714 } & 3 & 3 & 3 \\
\hline
case14\_ieee & 14 & 20 & 0.0883 & 0.0883 & 0.0883 & 0.0165 & 0.0166 & {\bf 0.0164 } & 18 & 18 & 18 \\
\hline
case24\_ieee\_rts & 24 & 38 & 0.0895 & 0.0895 & 0.0895 & 0.1067 & 0.1063 & {\bf 0.1062 } & 19 & 19 & 19 \\
\hline
case30\_as & 30 & 41 & 0.0771 & 0.0771 & 0.0771 & 0.0294 & 0.0294 & {\bf 0.0293 } & 31 & 31 & {\bf 32 } \\
\hline
case30\_fsr & 30 & 41 & 0.0927 & 0.0927 & 0.0927 & 0.0403 & 0.0402 & {\bf 0.0401 } & 9 & 9 & 9 \\
\hline
case30\_ieee & 30 & 41 & 0.0587 & 0.0587 & 0.0587 & 0.0064 & 0.0064 & 0.0064 & 36 & 36 & 36 \\
\hline
case39\_epri & 39 & 46 & 0.1058 & 0.1058 & 0.1058 & 0.0885 & 0.0887 & {\bf 0.0884 } & 21 & 21 & 21 \\
\hline
case57\_ieee & 57 & 80 & 0.0358 & 0.0358 & 0.0358 & 0.0501 & 0.0501 & {\bf 0.0500 } & 42 & 42 & 42 \\
\hline
case73\_ieee\_rts & 73 & 120 & 0.0911 & 0.0911 & 0.0911 & 0.1772 & 0.1761 & {\bf 0.1758 } & 39 & 39 & 39 \\
\hline
case89\_pegase & 89 & 210 & 0.0951 & 0.0950 & {\bf 0.0949 } & 0.0805 & 0.0806 & {\bf 0.0798 } & 107 & 107 & {\bf 108 } \\
\hline
case118\_ieee & 118 & 186 & 0.1068 & {\bf 0.1067 } & {\bf 0.1067 } & 0.1144 & 0.1136 & {\bf 0.1133 } & 70 & {\bf 71 } & {\bf 71 } \\
\hline
case162\_ieee\_dtc & 162 & 284 & 0.0645 & 0.0642 & {\bf 0.0641 } & 0.0419 & 0.0417 & {\bf 0.0414 } & 230 & {\bf 231 } & {\bf 231 } \\
\hline
case179\_goc & 179 & 263 & 0.1828 & 0.1828 & 0.1828 & 0.1635 & 0.1634 & {\bf 0.1628 } & 65 & 65 & 65 \\
\hline
case200\_tamu & 200 & 245 & 0.1906 & 0.1906 & 0.1906 & 0.0300 & 0.0302 & {\bf 0.0297 } & 124 & 121 & {\bf 126 } \\
\hline
case240\_pserc & 240 & 448 & 0.1919 & {\bf 0.1918 } & {\bf 0.1918 } & 0.2350 & 0.2328 & {\bf 0.2307 } & 81 & 84 & {\bf 85 } \\
\hline
case300\_ieee & 300 & 411 & 0.0713 & 0.0713 & 0.0713 & 0.1481 & 0.1478 & {\bf 0.1476 } & 140 & 140 & 140 \\
\hline
case500\_tamu & 500 & 597 & 0.1755 & 0.1755 & {\bf 0.1754 } & 0.0216 & 0.0218 & {\bf 0.0215 } & 375 & 373 & {\bf 376 } \\
\hline
case588\_sdet & 588 & 686 & 0.1844 & 0.1844 & {\bf 0.1843 } & 0.0602 & 0.0603 & {\bf 0.0599 } & {\bf 174 } & 173 & {\bf 174 } \\
\hline
\hline
\multicolumn{12}{|c|}{Congested Operating Conditions (API)} \\
\hline
case3\_lmbd\_api & 3 & 3 & 0.0379 & 0.0380 & {\bf 0.0378 } & {\bf 0.0464 } & 0.0465 & 0.0465 & 3 & 3 & 3 \\
\hline
case5\_pjm\_api & 5 & 6 & 0.0485 & 0.0485 & 0.0485 & 0.0271 & {\bf 0.0270 } & {\bf 0.0270 } & 4 & 4 & 4 \\
\hline
case14\_ieee\_api & 14 & 20 & 0.0416 & 0.0415 & {\bf 0.0412 } & 0.0135 & 0.0135 & {\bf 0.0134 } & 19 & 19 & 19 \\
\hline
case24\_ieee\_rts\_api & 24 & 38 & 0.0485 & {\bf 0.0484 } & {\bf 0.0484 } & 0.1122 & 0.1119 & {\bf 0.1118 } & 24 & 24 & 24 \\
\hline
case30\_as\_api & 30 & 41 & 0.0100 & 0.0100 & 0.0100 & 0.0067 & 0.0067 & 0.0067 & 39 & 39 & 39 \\
\hline
case30\_fsr\_api & 30 & 41 & 0.0363 & 0.0361 & {\bf 0.0358 } & 0.0140 & 0.0140 & {\bf 0.0138 } & 30 & 30 & 30 \\
\hline
case30\_ieee\_api & 30 & 41 & 0.0206 & 0.0205 & {\bf 0.0203 } & 0.0037 & 0.0037 & 0.0037 & 39 & 39 & 39 \\
\hline
case39\_epri\_api & 39 & 46 & 0.0385 & 0.0386 & {\bf 0.0384 } & 0.0165 & 0.0165 & 0.0165 & 43 & 43 & 43 \\
\hline
case57\_ieee\_api & 57 & 80 & 0.0237 & 0.0237 & 0.0237 & 0.0588 & {\bf 0.0586 } & {\bf 0.0586 } & 41 & 41 & 41 \\
\hline
case73\_ieee\_rts\_api & 73 & 120 & 0.0508 & {\bf 0.0507 } & {\bf 0.0507 } & 0.1660 & 0.1654 & {\bf 0.1653 } & 65 & 65 & 65 \\
\hline
case89\_pegase\_api & 89 & 210 & 0.1322 & 0.1325 & {\bf 0.1307 } & 0.0841 & 0.0849 & {\bf 0.0798 } & 100 & 99 & {\bf 101 } \\
\hline
case118\_ieee\_api & 118 & 186 & 0.1088 & 0.1080 & {\bf 0.1076 } & 0.0778 & 0.0749 & {\bf 0.0738 } & 127 & 129 & {\bf 130 } \\
\hline
case162\_ieee\_dtc\_api & 162 & 284 & 0.0304 & 0.0304 & {\bf 0.0301 } & 0.0079 & 0.0079 & {\bf 0.0078 } & 273 & 273 & 273 \\
\hline
case179\_goc\_api & 179 & 263 & 0.1699 & 0.1699 & 0.1699 & 0.1459 & 0.1457 & {\bf 0.1454 } & 108 & 108 & 108 \\
\hline
case200\_tamu\_api & 200 & 245 & 0.1877 & 0.1877 & 0.1877 & 0.0926 & 0.0930 & {\bf 0.0921 } & 73 & 73 & 73 \\
\hline
case240\_pserc\_api & 240 & 448 & 0.1747 & 0.1746 & {\bf 0.1745 } & 0.2490 & 0.2469 & {\bf 0.2461 } & 92 & 92 & 92 \\
\hline
case300\_ieee\_api & 300 & 411 & 0.0830 & 0.0830 & 0.0830 & 0.1808 & 0.1803 & {\bf 0.1799 } & 131 & 131 & 131 \\
\hline
case500\_tamu\_api & 500 & 597 & 0.1789 & 0.1789 & 0.1789 & 0.0583 & 0.0586 & {\bf 0.0580 } & 265 & 265 & {\bf 267 } \\
\hline
case588\_sdet\_api & 588 & 686 & 0.1835 & 0.1835 & 0.1835 & 0.0705 & 0.0704 & {\bf 0.0702 } & 160 & 160 & {\bf 161 } \\
\hline
\hline
\multicolumn{12}{|c|}{Small Angle Difference Conditions (SAD)} \\
\hline
case3\_lmbd\_sad & 3 & 3 & 0.0947 & 0.0947 & 0.0947 & 0.0701 & 0.0701 & 0.0701 & 2 & 2 & 2 \\
\hline
case5\_pjm\_sad & 5 & 6 & 0.0483 & {\bf 0.0482 } & {\bf 0.0482 } & 0.0062 & 0.0062 & 0.0062 & 5 & 5 & 5 \\
\hline
case14\_ieee\_sad & 14 & 20 & 0.0540 & 0.0540 & 0.0540 & 0.0069 & 0.0069 & 0.0069 & 19 & 19 & 19 \\
\hline
case24\_ieee\_rts\_sad & 24 & 38 & 0.0762 & 0.0762 & {\bf 0.0761 } & 0.0296 & 0.0296 & {\bf 0.0295 } & 29 & 29 & 29 \\
\hline
case30\_as\_sad & 30 & 41 & 0.0464 & 0.0464 & 0.0464 & 0.0086 & 0.0086 & {\bf 0.0085 } & 39 & 39 & 39 \\
\hline
case30\_fsr\_sad & 30 & 41 & 0.0574 & 0.0574 & 0.0574 & 0.0215 & 0.0215 & {\bf 0.0214 } & 22 & 22 & 22 \\
\hline
case30\_ieee\_sad & 30 & 41 & 0.0462 & 0.0462 & 0.0462 & 0.0045 & 0.0045 & 0.0045 & 36 & 36 & 36 \\
\hline
case39\_epri\_sad & 39 & 46 & 0.0701 & 0.0700 & {\bf 0.0699 } & 0.0209 & 0.0209 & 0.0209 & 39 & 39 & 39 \\
\hline
case57\_ieee\_sad & 57 & 80 & 0.0320 & 0.0320 & 0.0320 & 0.0093 & 0.0093 & {\bf 0.0092 } & 71 & 71 & 71 \\
\hline
case73\_ieee\_rts\_sad & 73 & 120 & 0.0843 & {\bf 0.0842 } & {\bf 0.0842 } & 0.0473 & 0.0472 & {\bf 0.0471 } & 78 & 78 & 78 \\
\hline
case89\_pegase\_sad & 89 & 210 & 0.0917 & 0.0917 & 0.0917 & 0.0611 & 0.0612 & {\bf 0.0605 } & 113 & 113 & 113 \\
\hline
case118\_ieee\_sad & 118 & 186 & 0.1043 & 0.1042 & {\bf 0.1041 } & 0.0713 & 0.0707 & {\bf 0.0706 } & 103 & 103 & 103 \\
\hline
case162\_ieee\_dtc\_sad & 162 & 284 & 0.0552 & 0.0550 & {\bf 0.0549 } & 0.0315 & 0.0314 & {\bf 0.0312 } & 238 & 238 & 238 \\
\hline
case179\_goc\_sad & 179 & 263 & 0.1505 & 0.1506 & {\bf 0.1503 } & 0.0943 & 0.0942 & {\bf 0.0938 } & 77 & 77 & 77 \\
\hline
case200\_tamu\_sad & 200 & 245 & 0.0921 & 0.0921 & {\bf 0.0920 } & 0.0191 & 0.0191 & {\bf 0.0190 } & 143 & 143 & 143 \\
\hline
case240\_pserc\_sad & 240 & 448 & 0.1843 & 0.1840 & {\bf 0.1832 } & 0.1717 & 0.1707 & {\bf 0.1685 } & 128 & 128 & {\bf 134 } \\
\hline
case300\_ieee\_sad & 300 & 411 & 0.0693 & 0.0693 & 0.0693 & 0.1428 & 0.1426 & {\bf 0.1424 } & 142 & 142 & 142 \\
\hline
case500\_tamu\_sad & 500 & 597 & 0.1040 & 0.1046 & {\bf 0.1039 } & 0.0106 & 0.0106 & {\bf 0.0105 } & 428 & 428 & 428 \\
\hline
case588\_sdet\_sad & 588 & 686 & 0.1708 & 0.1706 & {\bf 0.1701 } & 0.0378 & 0.0379 & {\bf 0.0375 } & 214 & 214 & 214 \\
\hline
\end{tabular}\\
\label{tbl:bt_quality}
\end{table*}

%%%%%%%%%%%%%%%%%

%
% github.lanlytics.com:cjc/obbt-experiments.git
% pub-bt-rt.r
% pub-bt-rt.pdf
% 
%%%%%%%%%%%%%%%%%

\begin{figure}[htp]
  \centering
%   \captionsetup{font=small}
  \includegraphics[width=8.8cm]{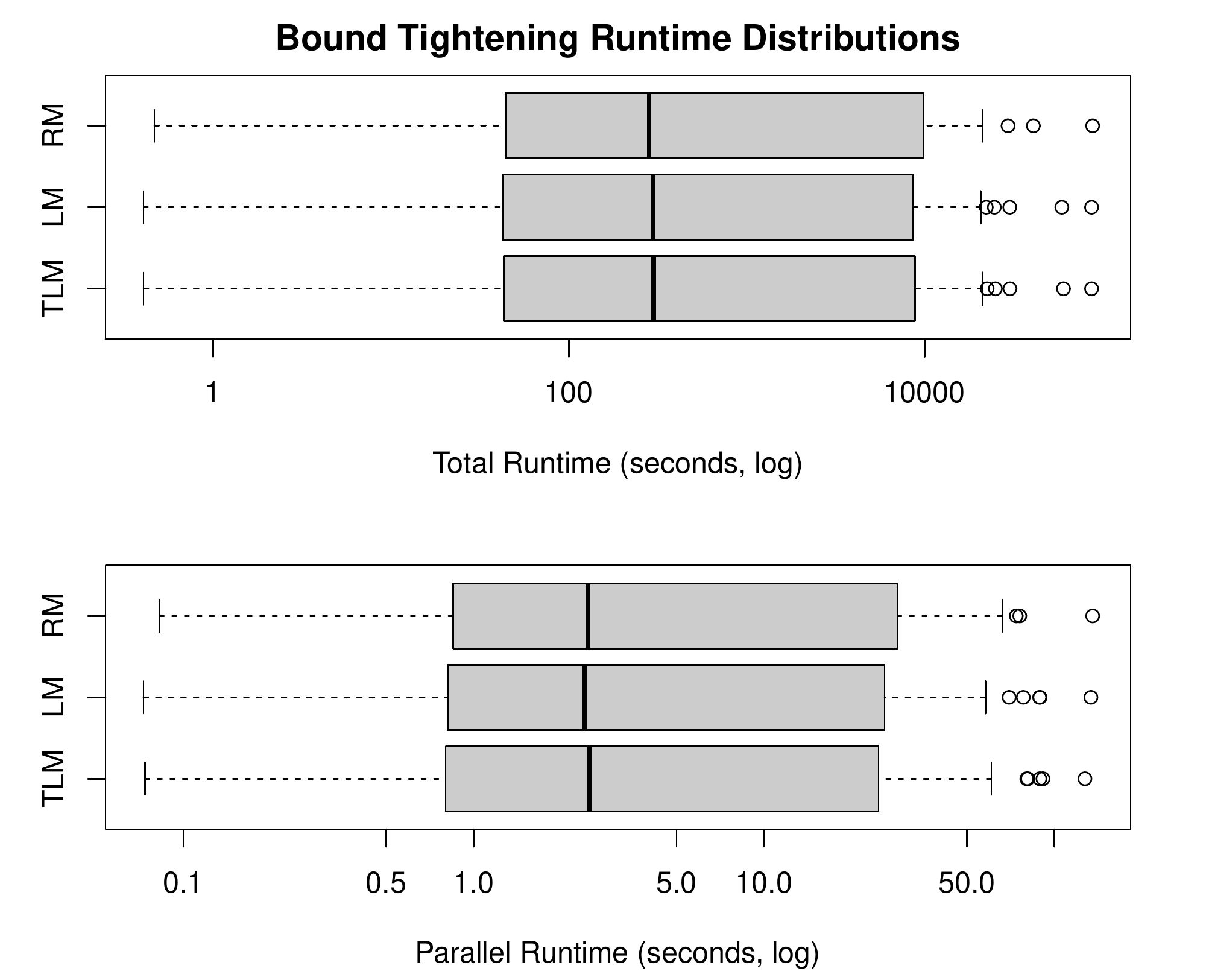}
  \caption{Runtime distributions of OBBT with various QC relaxations. The lower and upper ends of the boxes reflect the first and third quartiles, the lines inside the boxes denote the median, and the circles are outliers. }
  \label{fig:bt_rt}
\end{figure}

%%%%%%%%%%%%%%%%%

\subsection{Computing AC-OPF Lower Bounds}

Table \ref{tbl:go_quality} presents AC-OPF optimality gaps provided by the proposed QC relaxations both with and without bound tightening, where the percentage gap is defined as $100*(AC Heuristic - Relaxation)/AC Heuristic$.  For each network considered, the table reports: (1) The AC objective value from solving the non-convex problem with Ipopt; (2) the optimally gap of each relaxation, without bound tightening; (3) the optimally gap of each relaxation after running GO-OBBT.  In the interest of brevity, cases where the base QC-RM gap is $<1.0\%$ are omitted.  Bold text is used to highlight the best result in each row of the table.

Again, the results in Table \ref{tbl:go_quality} highlight the theoretical result showing that the QC-TLM relaxation dominates both the QC-RM and QC-LM relaxations.  It also highlights two interesting points: (1) In some cases the QC-TLM relaxation can provide benefits without bound tightening, e.g. case24\_ieee\_rts\_api, case73\_ieee\_rts\_api and case14\_ieee\_sad; (2) the QC-TLM relaxation's most significant benefits occur in the most challenging GO-OBBT cases, e.g. case89\_pegase\_api, case118\_ieee\_api.  It is important to note that small discrepancies in the GO-OBBT optimality gaps are observed.  These are due to numerical challenges resulting from the finite precision of floating point arithmetic and only occur in cases where the optimality gap is close to zero.

%  (2) In some cases, the QC-LM relaxation is weaker than the QC-RM relaxation without bound-tightening, thus corroborating the property depicted in Figure \ref{fig:venn}, e.g. case179\_goc\_api, case588\_sdet\_sad; 

%
% github.lanlytics.com:cjc/obbt-experiments.git
% pub-go-gap-tbl.py
% pub-go-gaps.tex	
% 
%%%%%%%%%%%%%%%%%

% Table as of 08/2018
\begin{table*}[t]
\centering
\footnotesize
\caption{The Quality of QC Relaxations for AC-OPF Lower Bounds.}
\begin{tabular}{|r|r|r||r||r|r|r||r|r|r|r|r|r|r|r|r|r|r|r|}
\hline
& & & & \multicolumn{3}{c||}{Base Opt. Gap (\%)} & \multicolumn{3}{c|}{GO-OBBT Opt. Gap (\%)} \\
Case & $|N|$ & $|E|$ & AC Obj. & RM & LM & TLM & RM & LM & TLM \\
\hline
\hline
\multicolumn{10}{|c|}{Typical Operating Conditions (TYP)} \\
\hline
case3\_lmbd & 3 & 3 & 5.8126e+03 & 1.22 & {\bf 0.97 } & {\bf 0.97 } & 0.01 & 0.01 & 0.01 \\
\hline
case5\_pjm & 5 & 6 & 1.7552e+04 & 14.55 & 14.55 & 14.55 & 6.01 & 6.14 & {\bf 5.80 } \\
\hline
case30\_ieee & 30 & 41 & 1.1974e+04 & 10.78 & {\bf 10.67 } & {\bf 10.67 } & 0.01 & 0.01 & 0.01 \\
\hline
case118\_ieee & 118 & 186 & 1.1580e+05 & 2.20 & {\bf 2.18 } & {\bf 2.18 } & 0.02 & 0.02 & 0.02 \\
\hline
case162\_ieee\_dtc & 162 & 284 & 1.2615e+05 & 7.54 & 7.54 & 7.54 & 0.05 & {\bf 0.03 } & 0.04 \\
\hline
case240\_pserc & 240 & 448 & 3.5700e+06 & 3.81 & 3.80 & {\bf 3.79 } & 2.37 & 2.36 & {\bf 2.30 } \\
\hline
case300\_ieee & 300 & 411 & 6.6422e+05 & 2.56 & {\bf 2.54 } & {\bf 2.54 } & {\bf 0.06 } & {\bf 0.06 } & 0.07 \\
\hline
case500\_tamu & 500 & 597 & 7.2578e+04 & 5.39 & 5.39 & 5.39 & 0.01 & 0.01 & 0.01 \\
\hline
case588\_sdet & 588 & 686 & 3.8155e+05 & 1.68 & 1.68 & 1.68 & 0.33 & 0.35 & {\bf 0.32 } \\
\hline
\hline
\multicolumn{10}{|c|}{Congested Operating Conditions (API)} \\
\hline
case3\_lmbd\_api & 3 & 3 & 1.1242e+04 & 5.63 & {\bf 4.58 } & {\bf 4.58 } & 0.04 & 0.04 & 0.04 \\
\hline
case5\_pjm\_api & 5 & 6 & 7.6377e+04 & 4.09 & 4.09 & 4.09 & 0.01 & 0.01 & 0.01 \\
\hline
case14\_ieee\_api & 14 & 20 & 1.3311e+04 & 1.77 & 1.77 & 1.77 & 0.02 & {\bf 0.01 } & 0.02 \\
\hline
case24\_ieee\_rts\_api & 24 & 38 & 1.3495e+05 & 13.01 & 11.06 & {\bf 11.03 } & 0.04 & 0.04 & 0.04 \\
\hline
case30\_as\_api & 30 & 41 & 4.9962e+03 & 44.61 & 44.61 & 44.61 & {\bf 0.72 } & 0.77 & 0.80 \\
\hline
case30\_fsr\_api & 30 & 41 & 7.0115e+02 & 2.76 & 2.76 & 2.76 & 0.13 & 0.13 & 0.13 \\
\hline
case30\_ieee\_api & 30 & 41 & 2.4032e+04 & 3.73 & 3.73 & 3.73 & 0.04 & 0.04 & 0.04 \\
\hline
case39\_epri\_api & 39 & 46 & 2.5721e+05 & 1.57 & 1.57 & 1.57 & 0.02 & 0.02 & 0.02 \\
\hline
case73\_ieee\_rts\_api & 73 & 120 & 4.2273e+05 & 11.07 & 9.56 & {\bf 9.54 } & {\bf 0.41 } & {\bf 0.41 } & 0.46 \\
\hline
case89\_pegase\_api & 89 & 210 & 1.4198e+05 & 8.13 & 8.13 & 8.13 & 1.69 & 1.50 & {\bf 1.33 } \\
\hline
case118\_ieee\_api & 118 & 186 & 3.1642e+05 & 28.63 & {\bf 28.62 } & {\bf 28.62 } & 4.27 & 3.64 & {\bf 3.39 } \\
\hline
case162\_ieee\_dtc\_api & 162 & 284 & 1.4351e+05 & 5.44 & 5.44 & 5.44 & {\bf 0.06 } & {\bf 0.06 } & 0.07 \\
\hline
case179\_goc\_api & 179 & 263 & 2.1326e+06 & 7.18 & 7.21 & {\bf 7.10 } & 0.03 & {\bf 0.02 } & {\bf 0.02 } \\
\hline
\hline
\multicolumn{10}{|c|}{Small Angle Difference Conditions (SAD)} \\
\hline
case3\_lmbd\_sad & 3 & 3 & 5.9593e+03 & 1.42 & {\bf 1.38 } & {\bf 1.38 } & 0.03 & 0.03 & 0.03 \\
\hline
case14\_ieee\_sad & 14 & 20 & 6.7834e+03 & 7.16 & 6.38 & {\bf 6.36 } & 0.30 & 0.30 & 0.30 \\
\hline
case24\_ieee\_rts\_sad & 24 & 38 & 7.6943e+04 & 2.93 & 2.77 & {\bf 2.74 } & 0.23 & 0.23 & 0.23 \\
\hline
case30\_as\_sad & 30 & 41 & 8.9749e+02 & 2.32 & 2.32 & {\bf 2.31 } & {\bf 0.31 } & 0.32 & 0.32 \\
\hline
case30\_ieee\_sad & 30 & 41 & 1.1974e+04 & 3.42 & 3.28 & {\bf 3.24 } & 0.01 & 0.01 & 0.01 \\
\hline
case73\_ieee\_rts\_sad & 73 & 120 & 2.2775e+05 & 2.54 & 2.39 & {\bf 2.38 } & {\bf 0.09 } & {\bf 0.09 } & 0.10 \\
\hline
case118\_ieee\_sad & 118 & 186 & 1.2924e+05 & 9.48 & 9.31 & {\bf 9.30 } & {\bf 0.24 } & 0.25 & 0.26 \\
\hline
case162\_ieee\_dtc\_sad & 162 & 284 & 1.2704e+05 & 8.02 & 7.98 & {\bf 7.97 } & 0.08 & 0.08 & 0.08 \\
\hline
case179\_goc\_sad & 179 & 263 & 8.3560e+05 & 1.05 & {\bf 1.04 } & {\bf 1.04 } & 0.02 & 0.02 & 0.02 \\
\hline
case240\_pserc\_sad & 240 & 448 & 3.6565e+06 & 5.24 & 5.22 & {\bf 5.21 } & 2.83 & 2.82 & {\bf 2.70 } \\
\hline
case300\_ieee\_sad & 300 & 411 & 6.6431e+05 & 2.36 & 2.30 & {\bf 2.29 } & 0.04 & 0.04 & 0.04 \\
\hline
case500\_tamu\_sad & 500 & 597 & 7.9234e+04 & 7.90 & 7.90 & 7.90 & 0.31 & 0.34 & {\bf 0.30 } \\
\hline
case588\_sdet\_sad & 588 & 686 & 4.0427e+05 & 6.26 & 6.28 & {\bf 6.24 } & 0.25 & 0.26 & {\bf 0.24 } \\
\hline
\end{tabular}\\
\label{tbl:go_quality}
\end{table*}
%%%%%%%%%%%%%%%%

\section{Conclusion} \label{sec:conclusion}
In summary, this article presents a strengthened version of the QC relaxation and shows its theoretical tightness and its effectiveness in computing better variable bounds and reducing the optimality gap on a wide range of test networks, when used in conjunction with bound-tightening techniques. Future research directions include extensions of these relaxations to the optimal transmission switching problem, both theoretically and computationally.

%%% AD BACK IN ONCE WE GET PAST FIRST ROUND OF REVIEWS
% \section*{Acknowledgment}
% The work was funded by the Center for Nonlinear Studies (CNLS) at LANL and the LANL's directed research and development project “POD: A Polyhedral Outer-approximation, Dynamic-discretization optimization solver”. It was carried out under the auspices of the NNSA of the U.S. DOE at LANL under Contract No. DE-AC52-06NA25396. 

\bibliographystyle{plain}
\bibliography{references.bib}

\begin{thebibliography}{10}

\bibitem{Bai2008}
Xiaoqing Bai, Hua Wei, Katsuki Fujisawa, and Yong Wang.
\newblock Semidefinite programming for optimal power flow problems.
\newblock {\em International Journal of Electrical Power \& Energy Systems},
  30(6-7):383--392, 2008.

\bibitem{Bienstock2015}
Daniel Bienstock and Abhinav Verma.
\newblock Strong {NP}-hardness of {AC} power flows feasibility.
\newblock {\em arXiv preprint arXiv:1512.07315}, 2015.

\bibitem{Carpentier1962}
J~Carpentier.
\newblock {Contribution a l'etude du dispatching economique}.
\newblock {\em Bulletin de la Societe Francaise des Electriciens},
  3(1):431--447, 1962.

\bibitem{Chen2016}
Chen Chen, Alper Atamt{\"u}rk, and Shmuel~S Oren.
\newblock Bound tightening for the alternating current optimal power flow
  problem.
\newblock {\em IEEE Trans. Power Syst.}, 31(5):3729--3736, 2016.

\bibitem{Coffrin2016}
C.~Coffrin, H.~L. Hijazi, and P.~Van Hentenryck.
\newblock The {QC} {R}elaxation: {A Theoretical and Computational Study on
  Optimal Power Flow}.
\newblock {\em IEEE Trans. on Power Systems}, 31(4):3008--3018, July 2016.

\bibitem{Coffrin2017}
C.~Coffrin, H.~L. Hijazi, and P.~Van Hentenryck.
\newblock Strengthening the {SDP} relaxation of {AC} power flows with convex
  envelopes, bound tightening, and valid inequalities.
\newblock {\em IEEE Transactions on Power Systems}, 32(5):3549--3558, Sept
  2017.

\bibitem{Coffrin2018a}
Carleton Coffrin, Russell Bent, Kaarthik Sundar, Yeesian Ng, and Miles Lubin.
\newblock Powermodels.jl: An open-source framework for exploring power flow
  formulations.
\newblock In {\em 2018 Power Systems Computation Conference}, pages 1--8, June
  2018.

\bibitem{Coffrin2015}
Carleton Coffrin, Hassan~L Hijazi, and Pascal Van~Hentenryck.
\newblock Strengthening convex relaxations with bound tightening for power
  network optimization.
\newblock In {\em International Conference on Principles and Practice of
  Constraint Programming}, pages 39--57. Springer, 2015.

\bibitem{Coffrin2018}
Carleton Coffrin and Line Roald.
\newblock Convex relaxations in power system optimization: A brief
  introduction.
\newblock {\em arXiv preprint arXiv:1807.07227}, 2018.

\bibitem{Dunning2017}
Iain Dunning, Joey Huchette, and Miles Lubin.
\newblock Ju{MP}: A modeling language for mathematical optimization.
\newblock {\em SIAM Review}, 59(2):295--320, 2017.

\bibitem{Frank2012}
Stephen Frank, Ingrida Steponavice, and Steffen Rebennack.
\newblock Optimal power flow: a bibliographic survey ii.
\newblock {\em Energy Systems}, 3(3):259--289, 2012.

\bibitem{gurobi}
Gurobi.
\newblock Gurobi optimizer reference manual, 2018.

\bibitem{Hijazi2017}
Hassan Hijazi, Carleton Coffrin, and Pascal~Van Hentenryck.
\newblock Convex quadratic relaxations for mixed-integer nonlinear programs in
  power systems.
\newblock {\em Math. Programming Computation}, 9(3):321--367, Sep 2017.

\bibitem{Jabr2006}
Rabih~A Jabr.
\newblock Radial distribution load flow using conic programming.
\newblock {\em IEEE trans. on power systems}, 21(3):1458--1459, 2006.

\bibitem{Kocuk2016}
Burak Kocuk.
\newblock {\em Global optimization methods for optimal power flow and
  transmission switching problems in electric power systems}.
\newblock PhD thesis, Georgia Institute of Technology, 2016.

\bibitem{Lehmann2016}
Karsten Lehmann, Alban Grastien, and Pascal Van~Hentenryck.
\newblock {AC}-feasibility on tree networks is {NP}-hard.
\newblock {\em IEEE Transactions on Power Systems}, 31(1):798--801, 2016.

\bibitem{Liu2018}
Jianfeng Liu, Anya Castillo, Jean-Paul Watson, and Carl~D. Laird.
\newblock Global solution strategies for the network-constrained unit
  commitment problem with ac transmission constraints.

\bibitem{Lu2018}
Mowen Lu, Harsha Nagarajan, Russell Bent, Sandra Eksioglu, and Scott Mason.
\newblock Tight piecewise convex relaxations for global optimization of optimal
  power flow.
\newblock In {\em Power Systems Computation Conference}, pages 1--7. IEEE,
  2018.

\bibitem{McCormick1976}
Garth~P McCormick.
\newblock Computability of global solutions to factorable nonconvex programs:
  Part i—convex underestimating problems.
\newblock {\em Mathematical programming}, 10(1):147--175, 1976.

\bibitem{Molzahn2014}
Daniel~K Molzahn and Ian~A Hiskens.
\newblock Moment-based relaxation of the optimal power flow problem.
\newblock In {\em Power Systems Computation Conference (PSCC), 2014}, pages
  1--7. IEEE, 2014.

\bibitem{Nagarajan2019}
Harsha Nagarajan, Mowen Lu, Site Wang, Russell Bent, and Kaarthik Sundar.
\newblock An adaptive, multivariate partitioning algorithm for global
  optimization of nonconvex programs.
\newblock {\em Journal of Global Optimization}, Jan 2019.

\bibitem{Nagarajan2016}
Harsha Nagarajan, Mowen Lu, Emre Yamangil, and Russell Bent.
\newblock Tightening {McC}ormick relaxations for nonlinear programs via dynamic
  multivariate partitioning.
\newblock In {\em International Conference on Principles and Practice of
  Constraint Programming}, pages 369--387. Springer, 2016.

\bibitem{Nagarajan2018lego}
Harsha Nagarajan, Kaarthik Sundar, Hassan Hijazi, and Russell Bent.
\newblock Convex hull formulations for mixed-integer multilinear functions.
\newblock In {\em Proceedings of the XIV International Global Optimization
  Workshop (LEGO 18)}, 2018.

\bibitem{Narimani2018}
M.~R. Narimani, D.~K. Molzahn, H.~Nagarajan, and M.~L. Crow.
\newblock {Comparison of Various Trilinear Monomial Envelopes for Convex
  Relaxations of Optimal Power Flow Problems}.
\newblock In {\em IEEE Global Conference on Signal and Information Processing
  (GlobalSIP)}. IEEE, 2018.

\bibitem{Puranik2017}
Yash Puranik and Nikolaos~V Sahinidis.
\newblock Bounds tightening based on optimality conditions for nonconvex
  box-constrained optimization.
\newblock {\em Journal of Global Optimization}, 67(1-2):59--77, 2017.

\bibitem{Rikun1997}
Anatoliy~D Rikun.
\newblock A convex envelope formula for multilinear functions.
\newblock {\em Journal of Global Optimization}, 10(4):425--437, 1997.

\bibitem{pglib}
{The IEEE PES Task Force on Benchmarks for Validation of Emerging Power System
  Algorithms}.
\newblock {PGLib Optimal Power Flow Benchmarks}.
\newblock Published online at
  \url{https://github.com/power-grid-lib/pglib-opf}.
\newblock Accessed: August 10, 2018.

\bibitem{Ipopt}
A.~W{\"a}chter and L.~T. Biegler.
\newblock On the implementation of a primal-dual interior point filter line
  search algorithm for large-scale nonlinear programming.
\newblock {\em Math. Prog.}, 106(1):25--57, 2006.

\end{thebibliography}

LA-UR-18-28769

\end{document}